\documentclass[11pt]{article}
\usepackage[margin=1in]{geometry}
\usepackage{amsmath,amssymb,amsthm,mathtools}
\usepackage{mathrsfs}
\usepackage[T1]{fontenc}
\usepackage{lmodern}
\usepackage{pict2e}
\usepackage[colorlinks=true,allcolors=blue]{hyperref}
\hypersetup{pdftitle={Integral face-edge duality for knot triangulations},pdfauthor={Sai Kambampati}}
\newtheorem{theorem}{Theorem}[section]
\newtheorem{proposition}[theorem]{Proposition}
\newtheorem{lemma}[theorem]{Lemma}
\newtheorem{corollary}[theorem]{Corollary}
\theoremstyle{definition}
\newtheorem{example}[theorem]{Example}
\theoremstyle{remark}
\DeclareMathOperator{\im}{im}
\DeclareMathOperator{\rank}{rank}
\DeclareMathOperator{\nullity}{nullity}
\DeclareMathOperator{\coker}{coker}
\DeclareMathOperator{\diag}{diag}
\newcommand{\R}{\mathbb R}
\newcommand{\Z}{\mathbb Z}
\newcommand{\Q}{\mathbb Q}
\newcommand{\Ct}{\overline C}
\newcommand{\Rt}{\overline R}
\newcommand{\eps}{\varepsilon}
\title{Integral face--edge duality for knot triangulations}
\author{Sai Kambampati\\\small \href{mailto:sai@kambampati.studio}{sai@kambampati.studio}}
\date{October 2, 2026}
\begin{document}
\maketitle
\begin{abstract}
For an ordered ideal triangulation of a knot exterior in $S^3$, we give a cochain construction relating its face matrix $C$ to its preferred-longitude gluing matrices $A,B$. The obstruction to uniqueness is the first cohomology of the ideal compactification. Its vanishing yields nullity doubling and the FAMED matrix identity, including a constrained formulation for singular face matrices. A separate rank argument under a strict angle structure supplies a proposed proof of Wong's generalized FAMED conjecture. Over the integers, the construction gives a direct-sum decomposition of $\ker_{\Z}B$ and an exact sequence from the two face cokernels to the gluing cokernel, with quotient $\Z/2\Z$. In the unhalved longitude convention this implies $|\det B|=2|\det C|^2$. We also obtain cyclic face torsion, order-reversal invariance of Smith normal form, and affine face coordinates for fixed lifted gluing systems. The determinant-one assertion and the full Andersen--Kashaev volume conjecture remain open here.
\end{abstract}

\section{Introduction}
The gluing equations of an ideal triangulation describe how the shapes of its tetrahedra fit around edges and peripheral curves. In the Teichm\"uller TQFT construction, a second collection of matrices records relations among variables attached to faces. The FAMED conditions compare these two descriptions. Ben Aribi and Wong introduced these conditions in their study of the Andersen--Kashaev volume conjecture \cite{BW}; Ben Aribi, Guilloux and Wong subsequently investigated them computationally \cite{BGW}. Wong's generalized conditions include triangulations whose face matrix is singular \cite{Wong}.

The question considered here is why the face matrices and the gluing matrices should satisfy the same constraints. We associate an additive edge cochain to a pair of face assignments, one for each direction of the vertex ordering. Differences of this cochain give the homogeneous shape variables. Face identifications force the edge equations. A separate cocycle on the dual spine shows that the preferred-longitude equation holds as well. The obstruction to recovering the face assignments uniquely is a cohomology group of the ideal compactification; this group vanishes for a knot exterior in $S^3$.

This construction gives the rational correspondence below, including singular face matrices. Integral lifts then relate kernels and cokernels over $\Z$. Finally, a combinatorial rank argument under a strict angle structure supplies the additional condition in Wong's generalized FAMED conjecture. The analytic consequences are applications of Wong's existing theorem. The determinant-one assertion in the original FAMED conjecture requires a further argument and remains open here.

\subsection{Conventions and notation}
Let $M$ be the compact exterior of a knot in $S^3$, and let $\mathcal T$ be an ordered ideal triangulation of its interior with $N$ tetrahedra. The triangulation is formed from abstract tetrahedra by pairing their face occurrences and removing the vertices. Paired faces may belong to the same tetrahedron, and distinct local edges may represent the same quotient edge. All faces are paired; there is one ideal vertex, $2N$ quotient faces and $N$ quotient edges. ``Ordered'' means that each abstract tetrahedron has a vertex order and every face pairing preserves the induced order on its three corners. These corners remain distinct in the abstract face even though all quotient vertices coincide. Fix an orientation of $M$. The sign of tetrahedron $i$ relative to its vertex order $0<1<2<3$ is $\eps_i\in\{1,-1\}$. If faces opposite vertices $k,l$ in tetrahedra $i,j$ are paired, then
\begin{equation}\eps_i(-1)^k=-\eps_j(-1)^l.\label{eq:facesign}\end{equation}
Let $X_k$ be the $N\times2N$ incidence matrix recording the quotient face opposite local vertex $k$. Set
\begin{equation}
 C=\begin{pmatrix}X_0-X_1+X_2\\X_2-X_3\end{pmatrix},\quad
 E=\diag(\eps_i),\quad D=\begin{pmatrix}0\\E\end{pmatrix},\quad R=X_0,
 \quad \Delta=\frac{E+I}{2}.\label{eq:face}
\end{equation}
In the notation of \cite{BW,Wong}, our matrices $(C,D,R,E)$ are $(\mathscr A,\mathscr B,\mathscr X_0,\mathscr E)$. In \cite{BGW}, they are $(\mathcal A,\mathcal B\mathcal E,\mathcal X_0,\mathcal E)$: that paper defines $\mathcal B=\binom{0}{I}$, whereas here $D=\binom{0}{E}$. Our $A,B$ below agree with the gluing matrices $\mathbf A,\mathbf B$ in these sources. Each row of $X_k$ has one entry equal to $1$, in the column of the corresponding quotient face. Distinct local faces can give the same column. Local orders, rather than a global ordering of ideal vertices, are the data being fixed.

Use the shape placement of those papers. The following table also gives
the dictionary with the quadrilateral types in \cite[Section 3.1]{GHHHR}:
each quadrilateral faces the indicated pair of opposite edges.
\[
\begin{array}{c|ccc}
\text{opposite-edge pair}&01\mid23&02\mid13&03\mid12\\\hline
\eps_i=+1&z&z''&z'\\
\eps_i=-1&z&z'&z''
\end{array}
\]
For a quotient edge $e$ and tetrahedron $i$, let
$(G_{\rm edge})_{ei}$ be the number of local edge occurrences of type
$z$ in tetrahedron $i$ that represent $e$. Define
$G'_{\rm edge},G''_{\rm edge}$ in the same way for $z',z''$.
All three full edge matrices are $N\times N$; repeated occurrences
are counted separately. Put
\[
A_{\rm edge}=G_{\rm edge}-G'_{\rm edge},\qquad
B_{\rm edge}=G''_{\rm edge}-G'_{\rm edge}.
\]

Choose an oriented simple closed curve $\lambda$ on $\partial M$
whose class is a primitive generator of
$\ker[H_1(\partial M;\Z)\to H_1(M;\Z)]$.
This is the preferred longitude, with either orientation allowed.
Represent it normally in the cusp triangulation: each arc crosses a
triangle between distinct sides and cuts off one corner. Orient the
cusp by the boundary orientation of $M$. An arc contributes $+1$ if
the cut-off corner lies to its left and $-1$ if it lies to its right.
Its contribution is assigned to the shape type of the tetrahedron
edge at that corner. Summing all such contributions in tetrahedron
$i$ defines the three integer entries $g_{\lambda i}$,
$g'_{\lambda i},g''_{\lambda i}$. Section~\ref{sec:peripheral}
expresses this turning sign in local vertex coordinates.

Delete the same one edge row from $G_{\rm edge},G'_{\rm edge},
G''_{\rm edge}$ and append the respective longitude rows to obtain
$N\times N$ matrices $G,G',G''$. Put
\begin{equation}A=G-G',\qquad B=G''-G'.\label{eq:nz}\end{equation}
When simplifying a peripheral formula, one may add integral combinations
of edge rows and of the tetrahedral rows $(1,1,1)$, carrying the lifted
logarithmic constants with them as in Section~\ref{sec:affine}.
No rescaling is made. Tetrahedral rows vanish after prime elimination;
edge-row additions are integral row operations on the retained system,
since the eliminated full edge rows sum to zero.
\begin{lemma}[Rank and symplectic identities]\label{lem:nzfacts}
The full eliminated edge block $(A_{\rm edge}\mid B_{\rm edge})$
has rank $N-1$, and its row relations are generated by the sum of all
rows being zero.
After one edge row is deleted and the row for any nonzero class in
$H_1(\partial M;\Q)$ is appended, the resulting matrices satisfy
\begin{equation}
\rank(A\mid B)=N,\qquad AB^{\mathsf T}=BA^{\mathsf T}.
\label{eq:nzfacts}
\end{equation}
These facts require no shape solution or angle structure.
\end{lemma}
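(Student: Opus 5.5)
This is the Neumann--Zagier package, and I would prove it by the standard route through the combinatorics of Neumann \cite{GHHHR}. The plan has three steps: the symplectic identity, the rank of the full edge block, and the effect of trading one edge row for a peripheral row.

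\emph{Symplectic identity.} Form the full matrices $G_{\rm edge},G'_{\rm edge},G''_{\rm edge}$ together with peripheral rows for a basis $\mu,\lambda$ of $H_1(\partial M;\Z)$. Define the form
\[
\omega(u,v)=\sum_i \det\begin{pmatrix}a_i&b_i\\ a'_i&b'_i\end{pmatrix},
\]
where the vector $(a_i,b_i)$ is obtained from the triple $(g,g',g'')$ by subtracting $g'$. The first claim is that $\omega$ vanishes between any two edge rows, and between an edge row and a peripheral row. I would check this locally. The contribution of tetrahedron $i$ to $\omega(e,f)$ only involves how the edges $e$ and $f$ meet the link triangles of that tetrahedron. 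Summing over tetrahedra, the contributions cancel in pairs across shared faces. This is Neumann's computation, and the orientation sign $\eps_i$ in the shape table is what makes the local cyclic order $z\to z'\to z''$ agree with the boundary orientation.

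For two peripheral curves, $\omega(\gamma,\delta)$ equals twice the intersection number $\gamma\cdot\delta$ on the torus. Only one peripheral row is appended here, so only $\omega(\gamma,\gamma)=0$ is needed, and this holds trivially. Since $\omega(u,v)$ is exactly the $(u,v)$ entry of $AB^{\mathsf T}-BA^{\mathsf T}$, these vanishing statements give $AB^{\mathsf T}=BA^{\mathsf T}$ for the retained rows.

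\emph{Rank of the edge block.} The sum of all edge rows is $(2,2,2)$ in every tetrahedron, since each tetrahedron has six edges, two of each type. After subtracting $g'$ this becomes $(0,0)$, so the sum of all rows of $(A_{\rm edge}\mid B_{\rm edge})$ vanishes. The content of the claim is that this is the only relation, i.e.\ the rank is $N-1$. I would follow Neumann's argument. Pair the edge block against the tetrahedral and peripheral data to build a chain complex
\[
J\colon\ 0\to C_0\to C_1\oplus\cdots\to\cdots\to 0
\]
whose homology is computed from $H_*(M)$ and $H_*(\partial M)$. The dimension count then uses $\chi=0$ together with $H_1(M;\Q)\cong\Q$ for a knot exterior, and shows that the kernel of the transpose of the edge map is spanned by the all-ones vector.

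\emph{Appending a peripheral row.} The rank of the new matrix is $N$ unless the peripheral row lies in the span of the edge rows. It is convenient to use the kernel statement in the form $\omega(\text{edges},\cdot)$ vanishing on a complement of the edge span. Nondegeneracy of $\omega$ modulo the edge rows, obtained from the $\omega(\gamma,\delta)=2\,\gamma\cdot\delta$ formula, then shows that the peripheral rows inject modulo the edge span. Concretely, if a nonzero class $\gamma$ lay in the edge span, pairing with a class $\delta$ of nonzero intersection would give $0=2\,\gamma\cdot\delta\neq0$, a contradiction. The rational case follows by scaling.

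The main obstacle is the rank statement, namely that the only relation among the edge rows is the total sum. I would import Neumann's homological theorem essentially verbatim, checking only that his conventions match the shape table here. The matching hinges on the role of $\eps_i$ for tetrahedra whose vertex order is negatively oriented, and this is the point I would verify most carefully.
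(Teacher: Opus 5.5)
Your proposal is correct and follows essentially the paper's route: both import Neumann's chain complex (the relevant reference is \cite{Neumann}, not \cite{GHHHR}) for the rank $N-1$ and isotropy of the edge rows. The paper gets the rank count not from $\chi$ or $H_1(M;\Q)$ but from $\alpha(v)=2\sum_e e$ at the single ideal vertex together with rational exactness at the edge group. Your independence argument for the appended peripheral row, via $\omega(\gamma,\delta)=2\,\gamma\cdot\delta$ against a second class $\delta$, is exactly the content of the injectivity from Neumann's Lemma~4.3 that the paper cites.
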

\begin{proof}
Apply the combinatorial complex in \cite[Theorem~4.1]{Neumann} to the
ideal compactification. At the unique ideal vertex $v$, Neumann's
vertex-to-edge map is $\alpha(v)=2\sum_e e$, because every edge has
both endpoints at $v$. Rational exactness at the edge group therefore
makes the kernel of the edge map the span of $\sum_e e$. Thus its image has
dimension $N-1$. The theorem makes this image isotropic in the
$2N$-dimensional space of tetrahedral edge-pair coordinates. The
signed-corner map of \cite[Lemma~4.3]{Neumann} injects
$H_1(\partial M;\Q)$ into the symplectic orthogonal of that image,
modulo the image itself. Therefore a nonzero peripheral class adds an
independent row orthogonal to all edge rows. Its pairing with itself is
zero because the pairing is alternating. Eliminating the prime
coordinate in each tetrahedron uses the relation
$e_z+e_{z'}+e_{z''}=0$ between its opposite-edge generators:
\[
g e_z+g'e_{z'}+g''e_{z''}
=(g-g')e_z+(g''-g')e_{z''}.
\]
In the shape placement above this writes the rows as $(A\mid B)$;
their isotropy is precisely the second identity in \eqref{eq:nzfacts}.
The raw edge exponent rows instead sum to $(2,2,2)$ in every
tetrahedron, which becomes zero after this elimination.
\end{proof}
For the original geometric setting, see \cite[Theorem 2.2 and
Propositions 2.3 and 2.5]{NZ}. The combinatorial formulation used here
does not require a geometric shape solution. The same coordinate
dictionary is used integrally in Section~\ref{sec:integral}.
The preferred longitude is nonzero in $H_1(\partial M;\Q)$, as required
by this lemma, but maps to zero in $H_1(M;\Q)$. Its vanishing in the
interior homology is a different fact, used in
Section~\ref{sec:peripheral}.
Our homogeneous shape coordinates are
\begin{equation}(\delta\log z,\delta\log z',\delta\log z'')=(t,s-t,-s),\label{eq:shapes}\end{equation}
so their gluing relation is $At-Bs=0$.
The peripheral row comes from the full, unhalved signed-corner sum,
with the allowed simplifications above, as in \cite{BGW,Wong}.
Primitivity refers to the class $[\lambda]$, not to
the integer row, which need not be primitive. The corresponding
quadrilateral class has boundary $2[\lambda]$ in the normalization
of \cite[Section 7.1]{GHHHR}; the integral argument below explains
this factor. Dividing the row by two would change the integral
normalization and the determinant statement. Reversing the global
turning convention negates the peripheral row and does not change
the relation or the absolute determinant. The variables in
\eqref{eq:shapes} are formal homogeneous logarithmic coordinates.
Their use does not assume that a geometric shape solution has been chosen.

We use $X=M/\partial M$ for the ideal compactification. Unless a coefficient ring is displayed, kernels, images, ranks and nullities are over $\Q$, or over $\R$ after extension of scalars; the ranks are the same. For an integer matrix $T$ with $m$ rows, $\coker T$ means $\Z^m/T\Z^{\operatorname{cols}(T)}$. A sublattice is \emph{primitive} if its quotient in the ambient lattice is torsion-free. The bars in $\Ct$ and $\Rt$ denote reversal of the local orders, not complex conjugation.

\begin{center}
\begin{tabular}{ll}
\hline
Symbol & Meaning and size\\
\hline
$C,\Ct$ & Original and reversed face matrices, $2N\times2N$\\
$D$ & Signed input matrix for face constraints, $2N\times N$\\
$R,\Rt$ & Face selectors $X_0,X_3$, of size $N\times2N$\\
$E,\Delta$ & $\diag(\eps_i)$ and $\diag((1+\eps_i)/2)$\\
$A,B$ & Edge and preferred-longitude gluing matrices, $N\times N$\\
$n$ & $\nullity C$\\
$W$ & Common solvability space in Section~\ref{sec:relation}\\
\hline
\end{tabular}
\end{center}

\subsection{Main statements}

The construction underlying the results is an explicit correspondence
between pairs of compatible face assignments and the homogeneous
gluing relation. Define
\[
\mathcal F_{\Q}=\left\{(x,y)\in(\Q^{2N})^2:
\begin{aligned}
 (X_0-X_1+X_2)x&=0,\\
 (X_3-X_2+X_1)y&=0,\\
 (X_2-X_3)x&=(X_1-X_0)y
\end{aligned}\right\}.
\]
Sections~\ref{sec:pairing}--\ref{sec:injectivity} prove that the map
\[
\begin{gathered}
\Phi_{\Q}:\mathcal F_{\Q}\longrightarrow\ker_{\Q}(A\mid-B),
\qquad (x,y)\longmapsto(t,s),\\
t=E(X_2-X_3)x,\qquad s=X_0x+X_3y+\Delta t.
\end{gathered}
\]
is an isomorphism. In particular, every formal solution $(t,s)$ has
a unique pair of compatible face assignments. The matrix identities
below are consequences of this correspondence; its complexification
is used in Section~\ref{sec:affine}.

\begin{theorem}[Rational face--edge duality]\label{thm:main}
With these conventions, put $n=\nullity C$. Then
\begin{enumerate}
\item $\nullity B=2n$.
\item If $C$ is invertible, then $B$ is invertible and
\begin{equation}B^{-1}A=RC^{-1}D+(RC^{-1}D)^{\mathsf T}+\Delta.\label{eq:famed}\end{equation}
\item Without invertibility, the face constraint space and its symmetric form coincide with the corresponding Neumann--Zagier relation, as specified in Section~\ref{sec:singular}.
\end{enumerate}
No angle structure is needed for these rational assertions.
\end{theorem}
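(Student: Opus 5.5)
The plan is to derive all three assertions from the isomorphism $\Phi_{\Q}:\mathcal F_{\Q}\to\ker_{\Q}(A\mid-B)$ announced above, proved in Sections~\ref{sec:pairing}--\ref{sec:injectivity}, together with Lemma~\ref{lem:nzfacts}. First compute $\dim\mathcal F_{\Q}$ in terms of $n$. The three defining equations of $\mathcal F_{\Q}$ say that $x\in\ker(X_0-X_1+X_2)$, that $y\in\ker(X_3-X_2+X_1)$, and that the two outputs $(X_2-X_3)x$ and $(X_1-X_0)y$ agree.

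The key observation is that $\Ct$, the face matrix for the reversed order, has rows $X_3-X_2+X_1$ and $X_1-X_0$, up to sign and up to the reindexing of $\eps_i$ under reversal. So $\mathcal F_{\Q}$ is the fibre product of $C$ and $\Ct$ over their common bottom block. I would show $\nullity\Ct=\nullity C=n$, either by an explicit change of basis or by the same argument applied to the reversed triangulation.

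Then $\dim\ker(A\mid -B)=N$, since $\rank(A\mid B)=N$ by Lemma~\ref{lem:nzfacts}. Hence $\dim\mathcal F_{\Q}=N$. The subspace $\{(x,y)\in\mathcal F_{\Q}:t=0\}$ maps onto $\ker_{\Q}(A\mid-B)\cap\{t=0\}=\{0\}\times\ker B$, so $\nullity B$ equals the dimension of $\{(x,y)\in\mathcal F_{\Q}: E(X_2-X_3)x=0\}$. Since $E$ is invertible, this is the condition $(X_2-X_3)x=0$. Combined with the third equation, it forces $(X_1-X_0)y=0$.

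So this subspace is exactly $\ker C\times\ker\Ct$, where $\Ct$ is written with rows $X_3-X_2+X_1$ and $X_1-X_0$. Its dimension is $n+n=2n$, which proves item 1. The main obstacle is the equality $\nullity\Ct=n$. I would first try an explicit invertible transformation: reversal maps $X_k\mapsto X_{3-k}$, so $\Ct$ has rows $X_3-X_2+X_1$ and $X_1-X_0$. I would then compare $\rank C$ and $\rank\Ct$ through the transpose, using the face pairing sign rule~\eqref{eq:facesign} to relate $C^{\mathsf T}$ to $\Ct$ up to signed permutations. Failing that, I would use the duality of $\Phi$ itself applied to the reversed order: $B$ is unchanged up to sign under reversal, so $\nullity C+\nullity\Ct$ is symmetric, and a separate upper bound gives equality.

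For item 2, assume $C$ invertible. Then $n=0$, so $B$ is invertible by item 1. Given $s$, set $t=B^{-1}As$, pull back by $\Phi^{-1}$, and solve the face equations. The equations $Cx=\binom{0}{E t}$, i.e.\ $Cx=Dt'$ with $t'=\eps$-scaled data, determine $x=C^{-1}Dt$. Similarly $y$ is determined by $\Ct$, and $\Ct^{-1}$ should be expressible via $C^{-\mathsf T}$ using the sign relations. Substituting into $s=X_0x+X_3y+\Delta t$ gives $s=(RC^{-1}D+\Rt\,\Ct^{-1}\overline D+\Delta)t$.

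The remaining identification is $\Rt\,\Ct^{-1}\overline D=(RC^{-1}D)^{\mathsf T}$. Since the symmetry of $B^{-1}A$ already follows from $AB^{\mathsf T}=BA^{\mathsf T}$, it is enough to verify this transpose identity directly by the same transpose relation between $C$ and $\Ct$. The formula~\eqref{eq:famed} then follows; the sign of $t$ versus $s$ is to be checked against~\eqref{eq:shapes}.

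For item 3, I would define the face constraint space as the image of $\mathcal F_{\Q}$ in $(t,s)$. Its symmetric form would be the bilinear form $t\cdot s$ restricted to pairs in $\mathcal F_{\Q}$, which is symmetric by the same transpose argument. The isomorphism $\Phi_{\Q}$ identifies this space with the Lagrangian $\ker(A\mid-B)$, and the form with the Neumann--Zagier form. These are precisely the constrained identities of Section~\ref{sec:singular}. None of the steps uses an angle structure.
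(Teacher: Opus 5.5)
Granting $\Phi_{\Q}$, your deduction of item~1 is correct: the preimage of $\{0\}\times\ker B$ is exactly $\ker C\times\ker\Ct$. The problem is that you treat $\Phi_{\Q}$ as a black box, and that isomorphism carries essentially all of the theorem's content. The paper's proof consists of establishing it:
\begin{itemize}
\item the solvability lemma $S_{\Ct}=O_C$, $S_C=O_{\Ct}$, and the count $\dim\mathcal L_F=\dim W+\dim W^\perp=N$;
\item the edge equations, because the face cochains $h_f$ cancel around every edge link;
\item the longitude equation, from the closed dual-spine cochain $a$, whose period on $\lambda$ vanishes because $[\lambda]=0$ in $H_1(M;\R)$ (this is where the preferred longitude is essential; compare the meridian in Example~\ref{ex:figureeight});
\item injectivity, from $\ker P\cong Z^1(X;\R)=H^1(X;\R)\cong H_2(M;\R)=0$, which is where the knot-exterior hypothesis enters.
\end{itemize}
None of these ideas appears in your plan, so as it stands the proposal does not prove the theorem.

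Even granting $\Phi_{\Q}$, you still need one further ingredient, and neither of your routes supplies it.
\begin{itemize}
\item Your first route fails. $C^{\mathsf T}$ and $\Ct$ are not related by signed permutations in general. A quotient face that is local face~$2$ in two tetrahedra (allowed by \eqref{eq:facesign} when their signs differ) gives a row of $C^{\mathsf T}$ with four nonzero entries, whereas every row of $\Ct$ has at most three.
\item Your fallback is vacuous. Reversal swaps $C$ and $\Ct$ and fixes $B$, so applying $\Phi$ to the reversed order returns the same equation $\nullity B=\nullity C+\nullity\Ct$.
\end{itemize}

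The missing idea is the symmetric pairing \eqref{eq:pair} on local face assignments. By \eqref{eq:facesign} the face-constant subspace satisfies $F=F^\perp$, and $\overline L_i=L_i^\perp$. Hence $\dim(F\cap L^\perp)=\dim(F+L)^\perp=\dim(F\cap L)$, which is $\nullity\Ct=\nullity C$.

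The same isotropy drives item~2. Pairing $\jmath x$ with $\jmath y$ for $Cx=Dt$ and $\Ct y=Dt'$ gives \eqref{eq:mixed}, and hence $\Rt\Ct^{-1}D=(RC^{-1}D)^{\mathsf T}$. Symmetry of $B^{-1}A$ alone would only show that $\Rt\Ct^{-1}D-(RC^{-1}D)^{\mathsf T}$ is symmetric, not that it vanishes. Two smaller points: the reversed system uses the same $D$, and the graph is $s=B^{-1}At$, not $t=B^{-1}As$.

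For item~3, the form $t\cdot s$ is not what Section~\ref{sec:singular} specifies. The required steps are:
\begin{itemize}
\item show $v^{\mathsf T}s=v^{\mathsf T}G_Ft$ for all $v\in W$, again from \eqref{eq:mixed};
\item conclude $\mathcal L_F=\{(t,s):t\in W,\ s-G_Ft\in W^\perp\}$;
\item compare with the row-reduced $(B\mid A)$ to obtain $\ker K_{NZ}=W=\ker K_F$, with $2n$ rows each (using $\operatorname{codim}W=2n$ from item~1), and $A_0-B_0G_F=QK_{NZ}$.
\end{itemize}
Saying that $\Phi_{\Q}$ ``identifies the forms'' skips exactly these steps.
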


\begin{theorem}[Integral kernel and cokernel duality]\label{thm:integral}
Let $\Ct$ denote the face matrix with all vertex orders reversed, and set $\Rt=X_3$. With integer kernels and cokernels, one has
\begin{equation}\ker_{\Z}B=R\ker_{\Z}C\ \oplus\ \Rt\ker_{\Z}\Ct.\label{eq:integerkernel}\end{equation}
There is a short exact sequence
\begin{equation}0\longrightarrow\coker C\oplus\coker\Ct\xrightarrow{\ \iota\ }\coker B\longrightarrow\Z/2\Z\longrightarrow0,\label{eq:cokersequence}\end{equation}
where $\iota(([Dt]_C,[Dt]_{\Ct}))=[At]_B$. Every element of the domain admits the indicated representation. The two face matrices have identical Smith normal forms. No splitting of \eqref{eq:cokersequence} is asserted.
\end{theorem}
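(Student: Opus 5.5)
The plan is to run the rational correspondence $\Phi_{\Q}$ of Theorem~\ref{thm:main} over $\Z$, and then read off kernels and cokernels.

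\textbf{Integral correspondence.} First define the integral analogue $\mathcal F_{\Z}=\mathcal F_{\Q}\cap(\Z^{2N})^2$. The issue is the map $\Phi$. Since $\Delta$ has entries in $\{0,1\}$, $\Phi$ carries $\mathcal F_{\Z}$ into $\ker_{\Z}(A\mid-B)$. The substantive claim is that $\Phi_{\Z}$ is onto $\ker_{\Z}(A\mid -B)$ except for an index-two phenomenon. I expect this to be encoded by the edge cochain of Section~\ref{sec:pairing}. An integral $(t,s)$ gives an integral edge cochain up to a class in $H^1(X;\Z)$. That group vanishes because $X$ is the compactification of a knot exterior in $S^3$, so $H^1(X;\Z)=0$. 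The preferred-longitude cocycle on the dual spine then contributes the factor $2$: the row is the unhalved signed-corner sum, and the quadrilateral class has boundary $2[\lambda]$.

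\textbf{Kernel decomposition.} Specialise to $t=0$. Then $\ker_{\Z}B$ is identified with the pairs $(x,y)\in\mathcal F_{\Z}$ having $(X_2-X_3)x=0$. By the third defining equation this forces $(X_1-X_0)y=0$, so $x\in\ker_{\Z}C$. Since the rows of $\Ct$ are the reversed combinations $X_3-X_2+X_1$ and $X_1-X_0$, also $y\in\ker_{\Z}\Ct$. The formula then reads $s=X_0x+X_3y=Rx+\Rt y$. Directness follows because $\Phi_{\Q}$ is injective. Integral surjectivity at $t=0$ should hold without the $\Z/2$ defect, because the longitude factor only enters through $t$. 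This gives \eqref{eq:integerkernel}, and rationally it matches $\nullity B=2n$.

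\textbf{Cokernels.} Next, build $\iota$ by sending $[Dt]_C\mapsto[At]_B$ and $[Dt]_{\Ct}\mapsto[At]_B$. The statement that every element of the domain admits this representation comes from surjectivity of $D$ modulo the image of $C$. The rows of $C$ and $D$ together span the relevant lattice, since $(X_2-X_3)$ combined with $E$ is unimodular on the second block. Well-definedness and injectivity come from applying the integral correspondence to the inhomogeneous system, that is, the constrained formulation of Section~\ref{sec:singular}. The relation $Cx=Dt$ means $t=E(X_2-X_3)x$ together with the first constraint, and $\Phi$ then produces $s$ with $At=Bs$, so $[At]_B=0$. Conversely, $At\in B\Z^N$ lifts back to face data. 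Exactness at $\coker B$ amounts to showing that the image of $A$ together with $B\Z^N$ has index $2$ in $\Z^N$. I would detect this with the parity of the longitude row, which is the unhalved row, so it is even modulo the edge rows. Comparing orders where $B$ is invertible gives $|\det B|=2|\det C|^2$ as a check.

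\textbf{Smith normal forms.} For the equality of Smith normal forms, I would show that $\Ct$ and $C$ (or its transpose) are related by unimodular matrices. Reversing the order on tetrahedron $i$ sends $X_k\mapsto X_{3-k}$ and $\eps_i\mapsto -\eps_i\cdot(\pm1)$. Using the face-sign relation \eqref{eq:facesign}, $\Ct$ becomes $PC^{\mathsf T}Q$ or $PCQ$ with signed permutations and block-unimodular $P,Q$. Alternatively, symmetry of the sequence \eqref{eq:cokersequence} under reversal, which swaps $C$ and $\Ct$ and leaves $B$ fixed, can be combined with a self-duality of $\coker B$. Here $AB^{\mathsf T}=BA^{\mathsf T}$ induces a linking form.

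\textbf{Expected main obstacle.} The hardest step is the exact identification of the $\Z/2$ cokernel: proving that the cokernel of $\iota$ is exactly $\Z/2\Z$, neither trivial nor larger. This needs a careful integral analysis of the longitude cocycle. My first attempt is a mod-$2$ computation: show that the longitude row of $A$ is congruent modulo $2$ to a combination of the edge rows, and that no smaller correction exists because $[\lambda]$ is primitive.
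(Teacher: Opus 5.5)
Your outline has the right overall shape: integral face lifts, a kernel comparison, an index-two computation and a reversal argument. However, the integral inputs that actually carry the theorem are either misidentified or missing.

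First, there is no index-two defect in the integral correspondence. In the paper every integral $(t,s)$ with $At=Bs$ has an integral face lift. The $\Z/2\Z$ is $\Z^N/(A\Z^N+B\Z^N)$, which is a statement about the cokernel of $(A\mid B)$, not its kernel.

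More importantly, you give no valid argument that integral lifts exist. Vanishing of $H^1(X;\Z)$ only makes $F_0:(x,y)\mapsto(Cx,\Ct y,Rx+\Rt y)$ injective, so it gives uniqueness. Existence of an integral preimage for every integral vector in the rational image means the image of $F_0$ is primitive. Equivalently, $F_0\otimes\mathbb F_p$ must be injective for every prime $p$, including $p=2$. The paper proves this by identifying $\ker(F_0\otimes k)$ with $Z^1(X;k)$ over every field $k$ and then using $H_1(X;\Z)=0$. The group $H^1(X;\Z)$ cannot see torsion in $H_1(X;\Z)$, and that torsion is exactly what would make the image non-primitive. Without this step two things remain unproved:
\begin{itemize}
\item the decomposition of $\ker_\Z B$ (your remark that the longitude factor only enters through $t$ is not an argument);
\item the equality $\{t:At\in B\Z^N\}=\{t:Dt\in C\Z^{2N}\cap\Ct\Z^{2N}\}$.
\end{itemize}

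Second, as written your $\iota$ is defined on each summand separately, and you check well-definedness from $Cx=Dt$ alone. Producing $s$ with $At=Bs$ requires both $Cx=Dt$ and $\Ct y=Dt$. The summand map $[Dt]_C\mapsto[At]_B$ is in fact not well defined. If $n>0$, then $S_C$ has codimension $n$ while $W$ has codimension $2n$. So after scaling there is an integral $t\in S_C\setminus W$ with $Dt\in C\Z^{2N}$. Yet $At\notin\im_\Q B$, because the $t$-projection of $\ker(A\mid-B)$ is $W$.

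What the theorem needs is joint surjectivity of $t\mapsto([Dt]_C,[Dt]_{\Ct})$: a single $t$ representing any given pair of classes. Separate surjectivity does not give this. Your second-block remark only reduces $\coker(C\mid D)$ to $\coker_\Z(X_0-X_1+X_2)$ and does not show that this vanishes. The paper proves joint surjectivity by identifying the cokernel of $\begin{pmatrix}C&0&-D\\0&\Ct&-D\end{pmatrix}$ with $H_1(X;\Z)=0$, via a local directed-edge presentation.

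For the index, your mod-$2$ congruence would give index at least $2$. Primitivity of $[\lambda]$ does not give the reverse bound. You also need the edge lattice to be saturated and the quadrilateral solution lattice modulo edge and tetrahedral solutions to be torsion-free. The paper obtains both, and the congruence itself, from the Garoufalidis--Hodgson--Hoffman--Rubinstein identification of that quotient with $2\Z\mu\oplus\Z\lambda$. That identification uses injectivity of $H_2(M,\partial M;\Z/2)\to H_1(\partial M;\Z/2)$ for a knot exterior, together with the fact that the unhalved row represents $2\lambda$.

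Third, the Smith-normal-form step has no working mechanism. The reversal $0123\mapsto3210$ is even, so each $\eps_i$ is preserved. Nothing forces $C$ and $\Ct$ to differ by signed permutations. The linking-form alternative is not carried out, and it would have to get past the $\Z/2\Z$ extension at $p=2$ and the free parts.

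The missing idea is that the lattice of integral face-constant local assignments is exactly its own annihilator under the unimodular pairing. After the unimodular local change of variables to the original and reversed constraint values, this gives an exact sequence $0\to\Z^{2N}\xrightarrow{\binom{C}{\Ct}}\Z^{4N}\to\Z^{2N}\to0$. From it one gets $\coker C\cong\coker(\Ct^{\mathsf T}H)$.
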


\begin{theorem}[Determinant relation]\label{thm:det}
With the same hypotheses and the unhalved preferred-longitude convention,
\begin{equation}|\det B|=2|\det C|^2.\label{eq:det}
\end{equation}
\end{theorem}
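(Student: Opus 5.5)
The plan is to deduce the determinant relation from Theorem~\ref{thm:integral} by taking orders of finite groups, with Theorem~\ref{thm:main}(1) deciding the degenerate case. I split into two cases according to whether $C$ is singular.

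\emph{Case 1: $\det C=0$.} Then $n=\nullity C\ge1$. By Theorem~\ref{thm:main}(1), $\nullity B=2n\ge1$, so $\det B=0$. Both sides of \eqref{eq:det} vanish. No integral input is needed here.

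\emph{Case 2: $\det C\neq0$.} For a square integer matrix $T$ with $\det T\ne0$, $\coker T$ is finite of order $|\det T|$. This follows from the Smith normal form: $\coker T\cong\bigoplus\Z/d_i\Z$ with $\prod d_i=|\det T|$. By Theorem~\ref{thm:main}(1), $\nullity B=0$, so $B$ is invertible over $\Q$ and $\coker B$ is finite.

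Next I check that $\det\Ct\ne0$. Theorem~\ref{thm:integral} states that $C$ and $\Ct$ have identical Smith normal forms, so $|\det\Ct|=|\det C|$. Alternatively, $\Ct$ is the face matrix of another ordered triangulation of the same knot exterior, so Theorem~\ref{thm:main}(1) applies to it as well. In either case $\coker\Ct$ is finite of order $|\det C|$.

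Now apply the short exact sequence \eqref{eq:cokersequence}. Multiplicativity of orders in a short exact sequence of finite abelian groups gives
\[
|\det B|=|\coker B|=2\,|\coker C|\,|\coker\Ct|=2|\det C|^2.
\]

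Two conventions must match. First, the $\Z/2\Z$ quotient in \eqref{eq:cokersequence} must be taken with the unhalved longitude row, which is the normalization fixed in the conventions. If the peripheral row were halved, $\det B$ would change by a factor of $2$ and the extra factor would disappear. I will say this explicitly, in line with the remark about dividing the row by two. Second, the cokernels must be taken over $\Z$ in the stated ambient lattices: $\Z^{2N}$ for $C$ and $\Ct$, and $\Z^N$ for $B$.

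The main obstacle is not in this deduction itself, which is formal once Theorem~\ref{thm:integral} holds. The real work lies in the exactness of \eqref{eq:cokersequence}, in particular that the cokernel of $\iota$ is exactly $\Z/2\Z$, which Theorem~\ref{thm:integral} supplies. The only point that needs care here is the equality $|\det\Ct|=|\det C|$. I would use the Smith normal form assertion for it, rather than assuming that reversal preserves all hypotheses of Theorem~\ref{thm:main}, although reversal does preserve them.
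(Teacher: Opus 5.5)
Your proposal is correct and is essentially the paper's own proof: it handles the singular case by nullity doubling (Theorem~\ref{thm:main}(1)), which forces $\det B=0$, and the nonsingular case by taking orders of finite groups in \eqref{eq:cokersequence}, using the equality of Smith normal forms of $C$ and $\Ct$ (Theorem~\ref{thm:integral-reversal}). Your alternative justification for $|\det\Ct|=|\det C|$ is also valid, namely applying Theorem~\ref{thm:main} to the reversed ordering. Reversal preserves the tetrahedron signs and the opposite-edge pairs, so it leaves $A$ and $B$ unchanged.
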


For geometric triangulations, parts (1) and (2) of Theorem~\ref{thm:main} give parts (1) and (3) of Conjecture~4.1 in \cite{BGW}. The determinant magnitudes asserted in part (2) of that conjecture are separate. The strict rank inequality needed for the following consequence is proved in Section~\ref{sec:geometric}.

\begin{theorem}[Generalized FAMED]\label{thm:generalized}
Every ordered ideal triangulation of a hyperbolic knot complement in $S^3$ admitting a strict angle structure is generalized FAMED with respect to its preferred longitude, in the sense of Definition~1.1 of \cite{Wong}.
\end{theorem}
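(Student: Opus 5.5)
We verify the conditions of Definition~1.1 of \cite{Wong} one at a time. Recall what that definition asks, in the current notation. First, the Neumann--Zagier relation must be expressible through the face data: the face constraint space of $C$ and $D$, with its symmetric form, must match the homogeneous gluing system $(A\mid B)$. This is required even when $C$ is singular. Second, a strict rank or positivity condition must hold, coupling the kernel of $C$ with the angle structure. The plan is to take the first condition directly from Theorem~\ref{thm:main}. Part (3) gives the coincidence of constraint spaces and symmetric forms with the preferred-longitude Neumann--Zagier relation. Part (1) gives $\nullity B=2\nullity C$, so the dimension bookkeeping in Wong's definition is consistent. No angle structure is used at this stage. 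This is why the hypothesis of the theorem only enters through the remaining condition.

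For the second condition I would use the strict angle structure $(a,b,c)\in(0,\pi)^{3N}$, with $a+b+c=\pi$ in each tetrahedron. Its angle sums are $2\pi$ around every edge and $0$ along $\lambda$. In the coordinates \eqref{eq:shapes} this means that $(\alpha,\gamma)$, after eliminating the prime angles, solves $A\alpha+B\gamma=\pi\nu$ for an explicit integer vector $\nu$ determined by the edge and longitude rows. Wong's extra condition is a strict inequality, asserting that a certain rank does not drop when the angle vector is adjoined. I would phrase it as the statement that the affine angle vector does not lie in the subspace cut out by the singular face directions, that is, that $\nu$ is not in the image of $B$ restricted to the directions $R\ker C\oplus\Rt\ker\Ct$ from Theorem~\ref{thm:integral}. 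I would prove this by contradiction. Suppose the rank drops. Then there is a nonzero vector $w$ in the left kernel of $B$ paired against $\nu$ with $w^{\mathsf T}(A\alpha)=\pi w^{\mathsf T}\nu$. Using the symplectic identity $AB^{\mathsf T}=BA^{\mathsf T}$ of Lemma~\ref{lem:nzfacts}, the vector $B^{\mathsf T}w$ lies in the kernel of $A^{\mathsf T}$ restricted appropriately, and so $w$ defines a leading-trailing deformation, a combination of edge rows and the longitude row. Pairing this deformation with the strictly positive angles gives a weighted sum of angles. Each tetrahedron contributes a sum with a fixed sign, and all these contributions are strictly positive because the angles lie in $(0,\pi)$. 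This contradicts the vanishing forced by the rank drop.

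The main obstacle is this positivity step. We must show that the combinatorial deformation attached to a kernel vector really has coefficients of a single sign tetrahedron by tetrahedron, or at least a nonzero total pairing. I would attempt it via the face-assignment pair $(x,y)\in\mathcal F_{\Q}$ corresponding under $\Phi_{\Q}$ to the kernel vector. Writing the angle pairing as a sum over faces of $x$ and $y$ values weighted by the angles at their corners, the face identifications \eqref{eq:facesign} should make it telescope to a sum of squares-type expression. An example is $\sum_i \eps_i(\cdots)^2$ corrected by $\Delta$, which is nonnegative and vanishes only when $(x,y)=0$. Injectivity of $\Phi_{\Q}$, together with the vanishing of $H^1(X;\Q)$ for a knot exterior, then rules out the degenerate case. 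Once the strict inequality holds, the generalized FAMED property follows by assembling both conditions, relative to the preferred longitude as in Definition~1.1 of \cite{Wong}.
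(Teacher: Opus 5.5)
\textbf{The strict condition is misidentified.} You handle the two row-equivalence requirements as the paper does: conditions (3) and (4) of Definition~1.1 of \cite{Wong}, as listed in Section~\ref{sec:geometric}, follow from Section~\ref{sec:singular} with no angle structure. The remaining condition in Wong's definition is not an inequality about an angle vector; it is $\nullity B=2n<N$. Nullity doubling gives the equality, so what must be proved is exactly $\rank B>0$, i.e.\ $B\neq0$. Your proposal never addresses this.

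The condition you substitute is vacuous. Rationally $R\ker C+\Rt\ker\Ct=W^\perp=\ker B$, and integrally this is \eqref{eq:integerkernel}. So the image of $B$ on these directions is $\{0\}$, and your criterion reduces to $\nu\neq0$.

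The positivity mechanism also cannot work as described. The pairing \eqref{eq:pair} vanishes identically on face-constant assignments. A weighted sum $\sum_i\eps_i(\cdots)^2$ is not sign-definite in general, since the orientation signs $\eps_i$ take both values (as in Example~\ref{ex:figureeight}). So no such identity forces $(x,y)=0$. Separately, a strict angle structure need not have zero angular longitude holonomy; condition (1) only asks that the strict angle space be nonempty.

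\textbf{The missing ingredient is Lemma~\ref{lem:nozero}.} The paper proves it combinatorially, using strict angles only to guarantee that every quotient edge has degree at least three.

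\begin{enumerate}
\item Suppose $B=0$. The full edge rows sum to zero, so $B_{\rm edge}=0$. Hence each tetrahedron satisfies $e_{03}+e_{12}=e_{02}+e_{13}$ in $C_1(X;\Q)$, which makes $f_{i0}-f_{i1}$ and $f_{i2}-f_{i3}$ into $2$-cycles.
\item Since $H_1(X;\Q)=0$, $\dim\ker\partial_2=N$. Therefore the $2$-regular multigraph on the $2N$ quotient faces, with edges $P_i=\{f_{i0},f_{i1}\}$ and $Q_i=\{f_{i2},f_{i3}\}$, has at least $N$ components.
\item Loops produce degree-one edges, and components of type $P_iP_j$ or $Q_iQ_j$ produce degree-two edges. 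So every component pairs one $P_i$ with one $Q_j$.
\item The induced edge identifications then collapse all local edges to a single quotient edge, forcing $N=1$. Both ordered one-tetrahedron pairings again create a degree-one edge, a contradiction.
\end{enumerate}

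With $\rank B>0$ established, $\nullity B=2n=N-\rank B<N$ follows, which is the condition your argument needed to supply.
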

This is Conjecture~1.2 of \cite{Wong}. It does not assert the extra longitude--meridian conditions of Definition~1.5 there.

\begin{theorem}[Cyclic face torsion]\label{thm:cyclic}
The torsion subgroup of $\coker C$ is cyclic. If $C$ is nonsingular, $\coker C\cong\Z/d\Z$ with $d=|\det C|$. Section~\ref{sec:embedded} realizes $d$ as the divisibility of a filling-core class in the first homology of an explicitly constructed embedded complex. These assertions also hold for an ordered ideal triangulation of an integral homology solid torus.
\end{theorem}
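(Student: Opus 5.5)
We prove Theorem~\ref{thm:cyclic} by realizing $\coker C$ as a piece of the first homology of an explicit cell complex $Y$, built from the face identifications alone, and then using that the ideal compactification carries almost no first homology.

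\emph{Step 1: a chain complex for $C$.} Read the rows of $C$ as boundary relations. For each tetrahedron $i$, the row $X_0-X_1+X_2$ is the alternating sum of three faces. The row $X_2-X_3$ is the difference of two faces. Build $Y$ with one $1$-cell for each quotient face, dual to that face. Attach a $2$-cell for each row of $C$, glued along the corresponding word of face-crossings, so the cellular boundary $\partial_2$ has matrix $C^{\mathsf T}$ or $C$ after fixing conventions. Then $\coker C$ is $H_1$ of the $2$-skeleton relative to its $0$-cells, up to a free summand coming from the $0$-cells. The sign condition \eqref{eq:facesign} is what makes these words close up consistently. I would check this first, because the whole identification depends on it.

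\emph{Step 2: identify $Y$ with a controlled modification of the knot complement.} Show that $Y$ is homotopy equivalent to the dual spine of $\mathcal T$ with some extra cells added. Alternatively, show that $Y$ embeds in a Dehn filling of $M$ along a slope determined by the vertex order; this is the ``filling-core'' of Section~\ref{sec:embedded}. Suppose this holds for a space $Z$ with $H_1(Z;\Z)$ cyclic. An integral homology solid torus $M$ satisfies $H_1(M)\cong\Z$, and then any Dehn filling of $M$ has cyclic first homology, generated by the image of the meridian of $M$. Then the torsion of $\coker C$ is a subgroup of a cyclic group, hence cyclic. This argument uses only $H_1(M;\Z)\cong\Z$, which explains why the statement extends to integral homology solid tori.

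\emph{Step 3: the nonsingular case.} If $C$ is nonsingular, $\coker C$ is finite of order $|\det C|$. By Step 2 it is cyclic, so $\coker C\cong\Z/d\Z$. The generator is the class of the filling core, and $d$ is its divisibility. As a cross-check, and as an alternative route if Step 2 is awkward, use Theorem~\ref{thm:integral}. The Smith forms of $C$ and $\Ct$ agree, and $\coker C\oplus\coker\Ct$ injects into $\coker B$ with cokernel $\Z/2$. Now suppose $\coker C$ had two invariant factors sharing a prime $p$. Then $\coker B$ would contain $(\Z/p)^4$. Next bound the number of invariant factors of $\coker B$: since $B$ is the Neumann--Zagier block, $\coker B$ relates to $H_1$ of the filling of $M$ along the longitude, which is $\Z$ plus at most $\Z/2$ data. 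This rank bound would give a contradiction if it can be made precise.

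The main obstacle is Step 2. The combinatorial complex must be identified homotopically with a subspace of a filling whose $H_1$ is cyclic. The difficulty is showing that the alternating face relations really correspond to $2$-cells whose attaching maps match the topology, rather than only matching homologically up to a finite index. I would first verify this on the dual spine, where faces correspond to $1$-cells and edges to $2$-cells, and then try to express the rows of $C$ as integral combinations of the edge $2$-cells together with one peripheral cell.
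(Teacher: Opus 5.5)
Your proposal has a genuine gap at the step you flag yourself. Both the construction in Step~1 and the mechanism in Step~2 need to be replaced, not just completed.

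\textbf{Step 1.} Suppose each quotient face is the dual edge joining the centres of its two adjacent tetrahedra. Then the words $f_0-f_1+f_2$ and $f_2-f_3$ of a tetrahedron are not closed paths, because their far endpoints are generally centres of different neighbouring tetrahedra. The sign condition \eqref{eq:facesign} does not repair this. Collapsing the $0$-cells instead only produces the presentation complex of an arbitrary integer matrix, which carries no topological information. The rows of $C$ close up only through the cusp. The paper places in each quotient face an arc $u_f$ parallel to its lower edge, with endpoints on two truncation edges. In each tetrahedron the chains $u_2+u_0-u_1$ and $u_2-u_3$ become closed curves after joining through cusp caps. They bound two disks meeting only along $u_2$. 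The union $\mathcal B_C\subset M$ is embedded, and its cellular chains \emph{relative to} the cusp graph $\mathcal G_C=\mathcal B_C\cap\partial M$ have boundary map $C^{\mathsf T}$. Hence $\coker C^{\mathsf T}=H_1(\mathcal B_C,\mathcal G_C;\Z)$.

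\textbf{Step 2.} The inference here is invalid. An embedding does not make $H_1$ of a subcomplex, or the torsion of $\coker C$, a subgroup of $H_1$ of the ambient manifold. Moreover, $\coker C$ is not the homology of an embedded complex but a relative group. After filling, $\coker C^{\mathsf T}\cong H_1(\mathcal Z)/\langle v\rangle$ for $\mathcal Z=\mathcal B_C\cup V$, where $v$ is the core class. Its torsion comes from the divisibility of $v$, not from $H_1(Y)$. In the meridional filling $H_1(Y)=0$, yet $\coker C$ is nonzero for the paper's integral homology solid tori with nonunit face determinant.

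The missing idea has two parts. First, fill along a slope making $Y=M\cup V$ an \emph{integral homology sphere}, which is possible since $H_1(M,\partial M;\Z)=0$; a slope tied to the vertex order does not do this. Second, use duality: for a regular neighbourhood $U$, $H_1(\mathcal Z)\cong H^1(\overline{Y\setminus U})$ is free abelian. A free group modulo a cyclic subgroup has cyclic torsion. When $C$ is nonsingular, $H_2(\mathcal Z,V)=\ker C^{\mathsf T}=0$, so $H_1(V)\to H_1(\mathcal Z)$ is injective. Then $H_1(\mathcal Z)\cong\Z$, $v=dv_0$, and $\coker C\cong\Z/d\Z$. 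This group is generated by $v_0$, not by the core class, which maps to zero.

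\textbf{Step 3.} Your fallback also fails. For nonsingular $C$, $\coker B$ contains $\coker C\oplus\coker\Ct$ and has order $2|\det C|^2$. So it is not ``$\Z$ plus at most $\Z/2\Z$''; for \texttt{m121}, $\coker B\cong\Z/4\Z\oplus\Z/8\Z$. Nor is it controlled by the longitude filling, whose $H_1$ is $\Z$.
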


\subsection{Relation to earlier work and proof structure}
The map $x\mapsto Rx$ in \eqref{eq:integerkernel} gives a kernel correspondence of the kind requested in Wong's Problem~2.1 of the 2026 RIMS problem collection \cite{RIMS}, in the preferred-longitude knot setting. Section~\ref{sec:affine} gives a face-variable formulation relevant to Problem~2.3 for each fixed lifted logarithmic system. The construction uses gluing matrices to choose an affine origin; it is not a presentation obtained from the face-pairing data alone. No existence of geometric solutions follows from this change of variables.

The established inputs are the Neumann--Zagier symplectic and rank results \cite{NZ,Neumann}, the integral normal-surface lattice theorem of Garoufalidis, Hodgson, Hoffman and Rubinstein \cite{GHHHR}, and Wong's analytic theorem \cite{Wong}. The contribution proposed here is the explicit correspondence between their gluing data and the face matrix $C$, together with its integral and topological consequences. The reversal invariance discussed in Section~3.3 of \cite{BGW} is refined to an equality of Smith normal forms.

The proof is organized as follows. Sections~\ref{sec:pairing}--\ref{sec:relation} construct the face relation and compute its dimension. Sections~\ref{sec:local}--\ref{sec:peripheral} identify it with the full edge-and-longitude relation. Section~\ref{sec:injectivity} proves uniqueness of the face lift, and Section~\ref{sec:singular} extracts the constrained symmetric form when $C$ is singular. Section~\ref{sec:geometric} proves positive gluing rank under strict angles and applies Wong's theorem. The affine formulation is followed by integral reversal, lattice, and embedded-complex arguments in Sections~\ref{sec:reversal}--\ref{sec:embedded}.

\begin{example}[The figure-eight knot and the peripheral choice]\label{ex:figureeight}
Use the two-tetrahedron triangulation and the face labels of Figure~1 in \cite{RIMS}. Number its four quotient faces from $0$ to $3$. The local face lists are $(1,3,2,0)$ and $(2,0,1,3)$, and the tetrahedron signs are $+1,-1$. In our notation,
\[
C=\begin{pmatrix}
0&1&1&-1\\-1&1&1&0\\-1&0&1&0\\0&1&0&-1
\end{pmatrix},\qquad
D=\begin{pmatrix}0&0\\0&0\\1&0\\0&-1\end{pmatrix},\qquad
R=\begin{pmatrix}0&1&0&0\\0&0&1&0\end{pmatrix}.
\]
The retained edge equation and the preferred longitude give
\[
A=\begin{pmatrix}1&-2\\0&4\end{pmatrix},\qquad
B=\begin{pmatrix}-1&-1\\0&2\end{pmatrix}.
\]
Consequently $RC^{-1}D=\diag(-1,1)$ and $\Delta=\diag(1,0)$, so both sides of \eqref{eq:famed} equal $\diag(-1,2)$. The homogeneous relation is $s_1=-t_1$, $s_2=2t_2$. Also $\det C=-1$ and $\det B=-2$, in agreement with \eqref{eq:det}.

Replacing the longitude by the meridian changes $B$ to
\[B_\mu=\begin{pmatrix}-1&-1\\1&1\end{pmatrix}.
\]
This matrix is singular although $C$ is invertible. Thus the preferred-longitude hypothesis cannot be replaced by an arbitrary peripheral curve. The affine constants for the same example are computed in Section~\ref{sec:affine}.
\end{example}

\section{The pairing and reversal}\label{sec:pairing}
The proof uses two face systems, one for the chosen order and one for
its reverse. Their interaction is encoded by a bilinear pairing on
local face values. All vector spaces in Sections~\ref{sec:pairing}--\ref{sec:geometric}
may be taken over $\Q$; we use $\R$ to keep the orthogonality notation
familiar.

Reverse every local order, retaining the quotient-face and tetrahedron
labels. The permutation $0123\mapsto3210$ is even, so it preserves the
tetrahedron sign. The reversed face matrix and restriction map are
\[
\Ct=\begin{pmatrix}X_3-X_2+X_1\\X_1-X_0\end{pmatrix},
\qquad \Rt=X_3.
\]
Let $V=\R^{4N}$ denote the space of local face assignments. If $f(i,k)$
is the quotient face represented by local face $k$ of tetrahedron $i$,
the map from quotient-face coordinates to local coordinates is
\[
\jmath:\R^{2N}\longrightarrow V,\qquad
\jmath(x)_{i,k}=x_{f(i,k)}=(X_kx)_i.
\]
It is injective, and its image is the subspace $F$ of assignments
constant on paired faces. Equip $V$ with
the nondegenerate symmetric pairing
\begin{equation}
\langle x,y\rangle=
\sum_i\eps_i(x_{i0}y_{i0}-x_{i1}y_{i1}+x_{i2}y_{i2}-x_{i3}y_{i3}).
\label{eq:pair}
\end{equation}
The subspace $F$ has dimension $2N$.
Equation~\eqref{eq:facesign} cancels the two contributions
of each quotient face, so $\langle F,F\rangle=0$ and hence $F=F^\perp$.

The local homogeneous constraint planes are
\[
L_i=\{(u,u+v,v,v):u,v\in\R\},\qquad
\overline L_i=\{(p,p,p+w,w):p,w\in\R\}.
\]
Their pairing is zero, and both have dimension two. Thus
$\overline L_i=L_i^\perp$. Set $L=\bigoplus_iL_i$. Explicitly,
$\jmath(\ker C)=F\cap L$ and
$\jmath(\ker\Ct)=F\cap L^\perp$. We use this identification when
writing a quotient-face assignment as its local tuples $x_i$.
Consequently
\[
\dim(F\cap L^\perp)
=\dim(F+L)^\perp
=4N-\dim(F+L)
=\dim(F\cap L),
\]
and the reversed matrix has the same nullity $n$ as $C$.

We also need an identity for the inhomogeneous systems. If $Cx=Dt$
and $\Ct y=Dt'$, their local values have the form
\[
x_i=(u_i,u_i+v_i,v_i,v_i-\eps_it_i),\qquad
y_i=(p_i-\eps_it'_i,p_i,p_i+w_i,w_i).
\]
Since $x,y\in F$, their pairing is zero. Its $i$th summand is
$-t'_iu_i+t_iw_i$, giving
\begin{equation}
(t')^{\mathsf T}Rx=t^{\mathsf T}\Rt y.
\label{eq:mixed}
\end{equation}
In particular, if $C$ is invertible then so is $\Ct$, and
\begin{equation}
\Rt\Ct^{-1}D=(RC^{-1}D)^{\mathsf T}.
\label{eq:transpose}
\end{equation}

\section{The face relation}\label{sec:face-relation}\label{sec:relation}
We now allow singular face matrices. Define
\begin{align*}
S_C&=\{t:Dt\in\im C\},& O_C&=(R\ker C)^\perp,\\
S_{\Ct}&=\{t:Dt\in\im\Ct\},& O_{\Ct}&=(\Rt\ker\Ct)^\perp.
\end{align*}
Here perpendiculars in $\R^N$ use the ordinary dot product, whereas
perpendiculars in $V$ use \eqref{eq:pair}.

\begin{lemma}\label{lem:solvability}
One has $S_{\Ct}=O_C$ and $S_C=O_{\Ct}$.
\end{lemma}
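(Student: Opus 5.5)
The plan is to turn each solvability condition into a membership statement in $V$ and then use the Lagrangian property $F=F^\perp$ together with $\overline L_i=L_i^\perp$ from Section~\ref{sec:pairing}. The orthogonality then gives both inclusions at once, so no dimension count is needed.

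\emph{Step 1 (local form of the reversed system).} For $t'\in\R^N$ let $c(t')\in V$ be the vector whose only nonzero entries are $c(t')_{i0}=-\eps_it'_i$. I will check that $\Ct y=Dt'$ holds if and only if $\jmath(y)\in c(t')+L^\perp$. Write the local tuple as $\jmath(y)_i=(a,b,c,d)$. The rows of $\Ct$ say $d-c+b=0$ and $b-a=\eps_it'_i$. Setting $p=b$ and $w=d$ gives
\[
\jmath(y)_i=(p,p,p+w,w)+(-\eps_it'_i,0,0,0),
\]
which is exactly an element of $\overline L_i$ plus $c(t')_i$. The converse is the same computation read backwards. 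This matches the parametrization of $y_i$ already used for \eqref{eq:mixed}. Since $\jmath$ is an isomorphism onto $F$, it follows that $t'\in S_{\Ct}$ if and only if $c(t')\in F+L^\perp$.

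\emph{Step 2 (orthogonality).} The pairing \eqref{eq:pair} is nondegenerate and $F^\perp=F$, so
\[
F+L^\perp=(F^\perp\cap L)^\perp=(F\cap L)^\perp=\jmath(\ker C)^\perp .
\]
Hence $t'\in S_{\Ct}$ if and only if $\langle c(t'),\jmath(x)\rangle=0$ for every $x\in\ker C$. Only the slots $(i,0)$ contribute, so
\[
\langle c(t'),\jmath(x)\rangle=\sum_i\eps_i(-\eps_it'_i)(X_0x)_i=-(t')^{\mathsf T}Rx .
\]
The condition is therefore $t'\perp R\ker C$ in the ordinary dot product, which is exactly $t'\in O_C$. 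This proves $S_{\Ct}=O_C$.

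\emph{Step 3 (the second identity).} The same argument applies with the roles exchanged. $Cx=Dt$ means $\jmath(x)_i=(u,u+v,v,v)+(0,0,0,-\eps_it_i)$, so $\jmath(x)\in c'(t)+L$, where $c'(t)$ is supported on the slots $(i,3)$. Then
\[
F+L=(F\cap L^\perp)^\perp=\jmath(\ker\Ct)^\perp .
\]
The pairing gives $\langle c'(t),\jmath(y)\rangle=\sum_i\eps_i\cdot(-1)\cdot(-\eps_it_i)(X_3y)_i=t^{\mathsf T}\Rt y$. This yields $S_C=O_{\Ct}$. Alternatively, one can invoke the reversal symmetry, since reversing the orders swaps $C$ with $\Ct$ and $R$ with $\Rt$ while preserving $D$.

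The main point of care is Step 1. One must confirm that the inhomogeneous term sits in a single slot, with the signs that make the pairing reproduce $R$ (respectively $\Rt$) up to an overall sign. Once that is checked, the rest is formal linear algebra using $F=F^\perp$.
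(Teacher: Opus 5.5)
Your proposal is correct and follows essentially the same route as the paper's proof. You use the same particular solutions supported in slot $0$ (respectively slot $3$), the same reduction via $F=F^\perp$ to $F+L^\perp=(F\cap L)^\perp$, and the same local pairings $-t_iu_i$ and $t_iw_i$ that identify the solvability conditions with $O_C$ and $O_{\Ct}$.
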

\begin{proof}
A local particular solution of the reversed equations is
$y_i^0=(-\eps_it_i,0,0,0)$. There is a face-constant solution precisely
when the affine space $y^0+L^\perp$ meets $F$. This is equivalent to
$y^0\in F+L^\perp$, and therefore to
$y^0\perp (F+L^\perp)^\perp=F\cap L$. For
$k_i=(u_i,u_i+v_i,v_i,v_i)$, the corresponding local pairing is
$-t_iu_i$. Thus the solvability condition is exactly
$t\perp R\ker C$. The original particular solution
$x_i^0=(0,0,0,-\eps_it_i)$ gives the second equality in the same way:
its pairing with $(p_i,p_i,p_i+w_i,w_i)$ is $t_iw_i$.
\end{proof}

Put $W=S_C\cap S_{\Ct}=S_C\cap O_C$, and define a linear relation
between two copies of $\R^N$ by
\begin{equation}
\mathcal L_F=\{(t,s):Cx=Dt,\ \Ct y=Dt,\
s=Rx+\Rt y+\Delta t\text{ for some }x,y\in\R^{2N}\}.
\label{eq:relation}
\end{equation}
The projection $(t,s)\mapsto t$ has image $W$. Its kernel within
$\mathcal L_F$ is $\{0\}\times(R\ker C+\Rt\ker\Ct)$. By
Lemma~\ref{lem:solvability},
\[
R\ker C+\Rt\ker\Ct=(O_C\cap O_{\Ct})^\perp=W^\perp.
\]
The projection therefore gives a short exact sequence of vector spaces
\[
0\longrightarrow W^\perp\xrightarrow{s\mapsto(0,s)}
\mathcal L_F\xrightarrow{(t,s)\mapsto t}W\longrightarrow0,
\]
and in particular
\begin{equation}
\dim\mathcal L_F=\dim W^\perp+\dim W=N.
\label{eq:dim}
\end{equation}
At this stage no injectivity of the face parametrization has been used.

\section{Local cochains and the edge equations}\label{sec:local}
Fix $x,y,t,s$ as in \eqref{eq:relation}. For each quotient face $f$,
take its abstract characteristic triangle, with ordered corners
$a<b<c$, and assign potentials on these three corners:
\[
\phi_f(a)=0,\qquad\phi_f(b)=x_f,\qquad
\phi_f(c)=x_f-y_f.
\]
Their differences define an additive cochain on the three directed
edge occurrences of this triangle:
\[
h_f(ab)=x_f,\qquad h_f(ac)=x_f-y_f,\qquad h_f(bc)=-y_f.
\]
The potentials, and therefore the cochains, agree at corresponding
corners and edge occurrences of paired abstract faces because the
gluing preserves the order. The potentials are not functions on the
single quotient vertex. Nor do we yet assert that the cochain values
agree on distinct occurrences of a quotient edge, whether in the same
face or in different faces. Descent to quotient edges in the
zero-shape case is proved in Section~\ref{sec:injectivity}.
Figure~\ref{fig:facecochain} records the local convention.

\begin{figure}[htbp]
\centering
\setlength{\unitlength}{1pt}
\begin{picture}(300,165)
\put(70,25){\vector(1,0){160}}
\put(70,25){\vector(2,3){80}}
\put(230,25){\vector(-2,3){80}}
\put(70,25){\circle*{3}}
\put(230,25){\circle*{3}}
\put(150,145){\circle*{3}}
\put(35,6){\makebox(70,12){$a:\ \phi(a)=0$}}
\put(195,6){\makebox(95,12){$b:\ \phi(b)=x$}}
\put(95,152){\makebox(110,12){$c:\ \phi(c)=x-y$}}
\put(130,31){\makebox(40,12){$h_{ab}=x$}}
\put(15,84){\makebox(85,12){$h_{ac}=x-y$}}
\put(205,84){\makebox(85,12){$h_{bc}=-y$}}
\end{picture}
\caption{The local cochain on an abstract face with ordered corners $a<b<c$.
Each arrow is oriented from the lower to the higher corner. Its value is the
difference of the endpoint potentials, so $h_{ab}+h_{bc}=h_{ac}$.}
\label{fig:facecochain}
\end{figure}

For a directed edge $jm$ of one tetrahedron, let $k,l$ be the two
remaining vertices. Denote the cochain on face $k$ by $h_k$ and set
\begin{equation}
q_{jm}=\eps\,\operatorname{sgn}(j,m,k,l)
\big(h_k(jm)-h_l(jm)\big).
\label{eq:q}
\end{equation}
Exchanging $j,m$ changes the permutation sign and both cochain signs;
exchanging $k,l$ changes the permutation sign and the order of the
difference. Thus $q_{jm}$ is an unoriented edge value, independent of
both choices. The local constraints are
\[
x_1=x_0+x_2,\quad x_2-x_3=\eps t,\qquad
y_2=y_1+y_3,\quad y_1-y_0=\eps t.
\]
They give
\begin{equation}
\begin{aligned}
q_{01}=q_{23}&=t,\\
q_{03}=q_{12}&=\eps(x_0+y_3),\\
q_{02}=q_{13}&=-t-\eps(x_0+y_3).
\end{aligned}
\label{eq:qtable}
\end{equation}
For example, $q_{01}=\eps(x_2-x_3)=t$ and
$q_{03}=\eps[(x_1-y_1)-(x_2-y_2)]=\eps(x_0+y_3)$.
Since $s=x_0+y_3+(\eps+1)t/2$, the positive-tetrahedron placement
gives $(q_z,q_{z'},q_{z''})=(t,s-t,-s)$. When $\eps=-1$, exchanging
the prime and double-prime edge pairs gives the same triple. This is
the placement fixed in \eqref{eq:shapes}.

For a fixed directed quotient edge, each local term in \eqref{eq:q}
is the incidence of an adjacent face times its cochain value.
Along a paired face, the two tetrahedra induce opposite face
orientations by \eqref{eq:facesign}, while the directed-edge cochain
value agrees. The two contributions therefore cancel. Summing over
the complete link of a quotient edge gives zero. This counts local
incidences separately and applies equally to self-gluings and repeated
edges. Hence the values in \eqref{eq:qtable} satisfy every homogeneous
edge equation.

\section{The longitude via a dual-spine cocycle}\label{sec:peripheral}
The edge cancellation does not by itself establish the peripheral
equation. To obtain that equation, we construct a cohomology class of
$M$ whose periods are the peripheral shape holonomies of the face
data. Its period on the preferred longitude is zero because that
curve is null-homologous in $M$.

Write $\phi_k(j)$ for the potential at corner $j$ on local face $k$.
The following auxiliary numbers express the difference of endpoint-potential
sums on two incident faces as a difference of face values:
\begin{equation}
\begin{aligned}
f_0&=0,&f_1&=-x_2-x_3,\\
f_2&=x_0-x_2-x_3+y_3,&f_3&=x_0-2x_3+y_3.
\end{aligned}
\label{eq:f}
\end{equation}
If faces $k,l$ meet in edge $jm$, then
\begin{equation}
\phi_k(j)+\phi_k(m)-\phi_l(j)-\phi_l(m)=f_l-f_k.
\label{eq:potential}
\end{equation}
Here is the full local check, with the pairs $k<l$ in increasing order:
\[
\begin{array}{c|c|l}
(k,l)&\{j,m\}&f_l-f_k\\\hline
(0,1)&\{2,3\}&-x_2-x_3\\
(0,2)&\{1,3\}&x_0-x_2-x_3+y_3\\
(0,3)&\{1,2\}&x_0-2x_3+y_3\\
(1,2)&\{0,3\}&x_0+y_3\\
(1,3)&\{0,2\}&x_0+y_3+\eps t\\
(2,3)&\{0,1\}&\eps t
\end{array}
\]
Substitution of the face potentials into the left side of
\eqref{eq:potential}, followed by the four local constraints from
Section~\ref{sec:local}, gives these entries.

\paragraph{The cusp sign.}
Consider a normal arc in the cusp triangle at vertex $j$. Suppose it
enters face $k$, leaves face $l$, and cuts off the corner corresponding
to edge $jm$. The truncation triangle has induced boundary orientation
$-\eps(-1)^j$ relative to the increasing order of its three vertices:
it is parallel to the face opposite $j$, with the opposite outward
normal. The arc goes from side $(m,l)$ to side $(m,k)$. With a corner
on the left counted positively, as fixed in Section~1.1, its signed
turn is therefore
\[
[-\eps(-1)^j]\,\operatorname{sgn}
(m,l,k\text{ relative to the remaining increasing triple})
=\eps\operatorname{sgn}(j,m,k,l).
\]
Multiplying this sign by $q_{jm}$ in \eqref{eq:q} gives the signed
shape contribution of the arc. Equation~\eqref{eq:potential} rewrites
that contribution as
\begin{equation}
h_k(jm)-h_l(jm)
=[f_l+2\phi_l(j)]-[f_k+2\phi_k(j)].
\label{eq:arc}
\end{equation}
If the peripheral holonomy convention uses the opposite global turning
sign, all these periods change sign together; their vanishing is
unchanged.

\paragraph{Closure on the spine.}
The dual two-skeleton of the ideal triangulation is a spine of $M$.
On a directed dual edge crossing from local face $(i,k)$ to its paired
face $(i',l)$, put
\[
a\big((i,k)\longrightarrow(i',l)\big)=f_{i,k}-f_{i',l}.
\]
The two local face occurrences remain distinct when $i=i'$, so this
also defines $a$ on a dual loop. Reversing the dual edge negates its
value.

To prove that $a$ is closed, traverse the boundary of a dual two-cell,
which encircles one quotient edge. In successive tetrahedral wedges,
let $k_r$ and $l_r$ be the incoming and outgoing faces. The period of
$a$ around the cell is
\[
\sum_r(f_{i_r,l_r}-f_{i_r,k_r}).
\]
By \eqref{eq:potential}, each term is the incoming sum of endpoint
potentials minus the outgoing sum. The outgoing face of one wedge is
paired with the incoming face of the next, and the two endpoint
potentials agree. Every term cancels. The argument counts occurrences,
so it also applies to loop edges and self-gluings. Therefore $a$
represents a class in $H^1(M;\R)$.

\paragraph{Peripheral periods.}
On a closed normal peripheral curve, the $\phi$ terms in
\eqref{eq:arc} cancel across successive paired faces. Its shape
holonomy is consequently $\sum_r(f_{i_r,l_r}-f_{i_r,k_r})$. Pushing the
curve into the dual spine follows exactly those face crossings, so
this sum is its period under $a$. The push preserves the homotopy
class: within each truncated tetrahedron one replaces the local arc
by a path through its center and the centers of the crossed faces.
Since $[\lambda]=0$ in $H_1(M;\R)$, the longitude period is zero.

Combined with the edge equations from Section~\ref{sec:local}, this
proves $\mathcal L_F\subseteq\ker(A\mid-B)$. Lemma~\ref{lem:nzfacts}
gives dimension $N$ for the right side, and \eqref{eq:dim} gives the
same dimension for the left side. Thus
\begin{equation}
\mathcal L_F=\ker(A\mid-B).
\label{eq:duality}
\end{equation}
The block-column matrix $\binom{B^{\mathsf T}}{A^{\mathsf T}}$ has rank
$N$ and, by $AB^{\mathsf T}=BA^{\mathsf T}$, its image lies in this
kernel. Equality of dimensions gives
\[
\ker(A\mid-B)=\im\begin{pmatrix}B^{\mathsf T}\\A^{\mathsf T}\end{pmatrix},
\qquad W=\im B^{\mathsf T}.
\]

\section{Unique face lifts and the nonsingular identity}\label{sec:factor-two}\label{sec:injectivity}
We have identified the face and edge relations without assuming that
face lifts are unique. Their uniqueness is the topological step
responsible for nullity doubling.

Consider the map
\[
P:\ker C\oplus\ker\Ct\longrightarrow\R^N,
\qquad P(x,y)=Rx+\Rt y.
\]
Suppose $P(x,y)=0$. Then $t=s=0$ in \eqref{eq:relation}, so every
$q_{jm}$ in \eqref{eq:qtable} is zero. Equation~\eqref{eq:q} says that
the cochains on the two faces meeting each tetrahedron edge agree
along that edge. They also agree across paired faces. Following the
connected link of each quotient edge gives a well-defined cochain on
the edges of the ideal compactification $X=M/\partial M$. On every
triangle it is a difference of vertex potentials, so it is a cellular
$1$-cocycle.

Conversely, let $h\in Z^1(X;\R)$. On the abstract triangle of face $f$
with ordered corners $a<b<c$, put $x_f=h(ab)$ and $y_f=-h(bc)$,
evaluating $h$ on the quotient edges represented by those occurrences.
These are quotient-face
values, since $h$ is global and the gluing preserves the order. In a
tetrahedron they read
\begin{align*}
(x_0,x_1,x_2,x_3)&=(h_{12},h_{02},h_{01},h_{01}),\\
(y_0,y_1,y_2,y_3)&=(-h_{23},-h_{23},-h_{13},-h_{12}).
\end{align*}
The triangle equations $h_{ab}+h_{bc}=h_{ac}$ give
$Cx=\Ct y=0$, and $Rx+\Rt y=0$ follows directly. The reconstructed
face potentials recover $h$ because $x_f-y_f=h(ac)$. These two
constructions are inverse, hence
\[
\ker P\cong Z^1(X;\R).
\]

The complex $X$ has one vertex, so the coboundary
$C^0(X;\R)\to C^1(X;\R)$ is zero. Therefore $Z^1(X;\R)=H^1(X;\R)$.
The quotient description and Poincar\'e--Lefschetz duality give
\[
H^1(X;\R)\cong H^1(M,\partial M;\R)
\cong H_2(M;\R)=0.
\]
The last equality holds for a knot exterior in $S^3$. Thus $P$ is
injective. Its domain has dimension $2n$, and its image is $W^\perp$
by Section~\ref{sec:face-relation}. Since $W=\im B^{\mathsf T}$,
\[
\nullity B=\dim W^\perp=2n.
\]
This proves the first part of Theorem~\ref{thm:main}. It also proves
uniqueness of every face lift in \eqref{eq:duality}, because the
difference of two lifts belongs to $\ker P$.

If $n=0$, the face relation is the graph of
\[
s=(RC^{-1}D+\Rt\Ct^{-1}D+\Delta)t.
\]
Nullity doubling makes $B$ invertible. Comparing this graph with
\eqref{eq:duality} and using \eqref{eq:transpose} gives
\[
B^{-1}A=RC^{-1}D+(RC^{-1}D)^{\mathsf T}+\Delta,
\]
which proves the second part of Theorem~\ref{thm:main}.

\section{Singular constraint spaces and forms}\label{sec:singular}
Choose invertible rational matrices $E_1,E_2$ such that
\[
E_1CE_2=\diag(I_r,0_n),\qquad r=2N-n.
\]
For a matrix partitioned accordingly, a subscript $r$ denotes the
first $r$ columns or rows and a subscript $n$ the remaining ones.
Define
\begin{align*}
K_F&=\begin{pmatrix}(RE_2)_n^{\mathsf T}\\(E_1D)_n\end{pmatrix},\\
H_F&=(RE_2)_r(E_1D)_r,\qquad
G_F=H_F+H_F^{\mathsf T}+\Delta.
\end{align*}
The last $n$ columns of $E_2$ form a basis of $\ker C$, so the first
block of $K_F$ imposes $t\perp R\ker C$. The second block imposes
solvability of $Cx=Dt$. Thus
\[
\ker K_F=W,\qquad\rank K_F=2n.
\]
The rank follows from $\operatorname{codim}W=2n$, already proved;
there is no assumption that these two sets of constraints are
independent before that argument.

For $t\in W$, a particular solution is
$x_t=(E_2)_r(E_1D)_r t$. If $(t,s)\in\mathcal L_F$, choose a
reversed solution $\Ct y=Dt$. For $v\in W$, changes of an original
solution lie in $\ker C$ and disappear after multiplication by
$v^{\mathsf T}R$. Applying \eqref{eq:mixed} to $Cx_v=Dv$ and
$\Ct y=Dt$ gives
\[
v^{\mathsf T}\Rt y=t^{\mathsf T}Rx_v.
\]
Consequently
\[
v^{\mathsf T}s
=v^{\mathsf T}Rx_t+t^{\mathsf T}Rx_v+v^{\mathsf T}\Delta t
=v^{\mathsf T}G_Ft.
\]
Conversely, for fixed $t\in W$ every $s$ in the fiber differs from
any one such value by $W^\perp$. Hence the entire relation, including
its off-diagonal bilinear information, is
\[
\mathcal L_F=
\{(t,s):t\in W,\ s-G_Ft\in W^\perp\}.
\]

To compare this with the row conditions in \cite[Definition~1.1]{Wong},
row-reduce $(B\mid A)$ and write its $N$ independent rows as
\[
\begin{pmatrix}B_0&A_0\\0&K_{NZ}\end{pmatrix},
\]
where $B_0$ has $N-2n$ rows and $K_{NZ}$ has $2n$ rows. The equation
$At=Bs$ is equivalent to $K_{NZ}t=0$ and $A_0t=B_0s$. Since $B_0$
has full row rank, the second equation can be solved for $s$ for
every $t$ satisfying the first. Equation~\eqref{eq:duality} therefore
gives
\[
\ker K_{NZ}=W=\ker K_F.
\]
The matrices $K_{NZ}$ and $K_F$ both have $2n$ independent rows, so
they are row equivalent.

Furthermore, $\operatorname{row}B_0=\operatorname{row}B=W$. Thus
$B_0$ annihilates $W^\perp$. The preceding description of
$\mathcal L_F$ now gives $A_0t=B_0G_Ft$ for every $t\in W$.
Each row of $A_0-B_0G_F$ lies in $W^\perp=\operatorname{row}K_{NZ}$.
There is consequently a rational matrix $Q$ with
$A_0-B_0G_F=QK_{NZ}$, and the explicit invertible row operation
\[
\begin{pmatrix}I&-Q\\0&I\end{pmatrix}
\begin{pmatrix}B_0&A_0\\0&K_{NZ}\end{pmatrix}
=
\begin{pmatrix}B_0&B_0G_F\\0&K_{NZ}\end{pmatrix}
\]
proves the required quadratic compatibility. These arguments establish
the third part of Theorem~\ref{thm:main} for every rational choice of
$E_1,E_2$.

\section{Strict angles and positive gluing rank}\label{sec:geometric}
A strict angle structure assigns an angle in $(0,\pi)$ to each pair
of opposite tetrahedron edges, with sum $\pi$ in each tetrahedron
and sum $2\pi$ around each quotient edge. In particular, each
quotient edge has degree at least three, where degree counts local
edge occurrences. The following combinatorial lemma provides the
strict rank inequality that does not follow from nullity doubling
alone.

\begin{lemma}\label{lem:nozero}
If the ordered knot triangulation admits a strict angle structure,
its full edge block $B_{\rm edge}$ is nonzero. Consequently
$\rank B>0$.
\end{lemma}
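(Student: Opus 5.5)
The second assertion follows quickly from the first, so the plan is to prove that $B_{\rm edge}\neq0$ by contradiction and then pass to $B$. For the passage, recall from Lemma~\ref{lem:nzfacts} that the rows of $(A_{\rm edge}\mid B_{\rm edge})$ sum to zero, so the rows of $B_{\rm edge}$ also sum to zero. If every row except the deleted one vanished, the deleted row would equal minus the sum of the others and hence vanish too. Therefore a nonzero $B_{\rm edge}$ keeps a nonzero row among the $N-1$ retained edge rows of $B$, and $\rank B>0$.

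For the main claim, suppose $B_{\rm edge}=0$, i.e.\ $G''_{\rm edge}=G'_{\rm edge}$ entrywise. I would combine two sources of constraints.

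The first is the angle structure. Eliminating the prime angle by $b_i=\pi-a_i-c_i$ turns the edge equations into
\[
A_{\rm edge}a+B_{\rm edge}c=2\pi\mathbf 1-\pi G'_{\rm edge}\mathbf 1 .
\]
Under the hypothesis $B_{\rm edge}=0$, the double-prime angles $c$ drop out. Then $a$ is pinned to a one-dimensional affine set (since $\rank A_{\rm edge}=N-1$), and $c$ is free subject only to strictness.

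The second is the combinatorics of the vertex order. Because face pairings preserve the order, a local edge $jm$ sits in a definite position $ab$, $ac$ or $bc$ in each of its two adjacent faces. The six local edges behave as follows:
\begin{itemize}
\item $01$ is $ab/ab$, $23$ is $bc/bc$ and $03$ is $ac/ac$;
\item $12$ switches $ab\leftrightarrow bc$, $02$ switches $ab\leftrightarrow ac$, and $13$ switches $ac\leftrightarrow bc$.
\end{itemize}
Now traverse the link of a quotient edge in the direction induced by the orientation, using \eqref{eq:facesign} to track signs through $\eps_i$. The switches then define a signed flow on the three states $ab,ac,bc$. Closing the cycle forces conservation at each state, which gives signed identities of the form
\[
f_{12}=-f_{02}=-f_{13}
\]
for every quotient edge. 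These identities relate signed counts of $02,13$ occurrences to $03,12$ occurrences. By the shape table, these are exactly the $z''$ and $z'$ types, with roles swapped according to $\eps_i$.

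The key step, and the one I expect to be the main obstacle, is to combine $G''_{\rm edge}=G'_{\rm edge}$ with these conservation identities into a contradiction. The target is to show that every quotient edge would then have all its occurrences of type $z$, or a degree pattern incompatible with degree at least three. I would try first to sum the conservation identities against the angle structure, using the freedom in $c$ noted above. Concretely, I would perturb $c_i\mapsto c_i\pm\tau$, $b_i\mapsto b_i\mp\tau$ in a single tetrahedron and chosen signs. This perturbation preserves all edge sums when $B_{\rm edge}=0$, so it can be pushed until some angle reaches $0$ or $\pi$. The point is to show that, using the ordered structure, the perturbation can be arranged to keep the configuration strict along a path that forces a contradiction. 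Examples are a quotient edge whose angle sum must exceed $2\pi$, or a tetrahedron sum leaving $\pi$.

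If that fails, the fallback is a direct count at a single extremal edge, for instance one containing a local $03$ occurrence (the edge from minimum to maximum). At such an edge the $ac/ac$ state and the flow identities should force a $z'$ or $z''$ occurrence with unbalanced sign, contradicting $B_{\rm edge}=0$.
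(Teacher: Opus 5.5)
Your passage from $B_{\rm edge}\neq0$ to $\rank B>0$ is correct and is exactly the paper's final step. The main claim, however, is not proved. You name the decisive step as the obstacle yourself, and none of your three suggested routes can close it.

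\begin{itemize}
\item \emph{Angle perturbation.} Under $B_{\rm edge}=0$ the edge equations become $A_{\rm edge}a=2\pi\mathbf 1-\pi G'_{\rm edge}\mathbf 1$. The strict angle space is therefore fibred over the admissible part of the $a$-line, with fibres $\{c:0<c_i<\pi-a_i\}$. Pushing $(b_i,c_i)$ along these free directions only reaches the boundary of that region, which contradicts nothing. Strictness contributes only that every quotient edge has degree at least three.
\item \emph{Circulation identities.} These hold for every ordered triangulation and sum over all tetrahedra around an edge. By contrast, $B_{\rm edge}=0$ says that, in each tetrahedron separately, $e$ occurs as $02$ or $13$ exactly as often as it occurs as $03$ or $12$. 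Combined, they only make the four $\eps$-weighted occurrence counts of types $02,03,12,13$ equal. That is a consistent linear condition, not a contradiction.
\item \emph{Fallback at an extremal edge.} A local $03$ occurrence of $e$ only forces $e$ to reappear as $02$ or $13$ in the same tetrahedron. This is an identification of edges, not a sign imbalance.
\end{itemize}

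What is missing is the global input $H_1(X;\Q)=0$. Your plan never uses it, and the paper's contradiction rests on it. The paper's argument runs as follows.

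\begin{itemize}
\item Read column by column, $B_{\rm edge}=0$ is the identity $e_{03}+e_{12}=e_{02}+e_{13}$ of formal sums of quotient edges in each tetrahedron. Hence $f_0-f_1$ and $f_2-f_3$ are cellular $2$-cycles of $X$.
\item Since $X$ has one vertex, $\partial_1=0$, so $\dim\ker\partial_2=N+b_1(X)$. This equals $N$ for a knot exterior.
\item Consider the $2$-regular multigraph on the $2N$ quotient faces with edges $P_i=\{f_{i0},f_{i1}\}$ and $Q_i=\{f_{i2},f_{i3}\}$. Its incidence vectors span a space of dimension $2N-k$ inside $\ker\partial_2$, so it has $k\ge N$ components.
\item Degree at least three rules out loops, which create degree-one edges, and $PP$ or $QQ$ components, which create degree-two edges. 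So every component is a mixed pair $P_i,Q_j$.
\item Both ordered pairings of a mixed pair force $e_{02}=e_{12}=e_{13}=e_{03}$ in every tetrahedron. Connectivity then propagates a single quotient edge through the triangulation, so $N=1$.
\item For $N=1$, both order-preserving self-pairings again produce a degree-one edge.
\end{itemize}

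Your local ordering combinatorics and angle perturbations never see the homology of $X$. In general the component bound weakens to $k\ge N-b_1(X)$, and this is exactly where the knot-exterior hypothesis enters.
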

\begin{proof}
Suppose instead that $B_{\rm edge}=0$. Orient local edges and faces
by increasing vertex order; these orientations agree across paired
faces. In the cellular chain complex of $X=M/\partial M$, write
$e_{ab}$ for the quotient-edge generator represented by local edge
$ab$ with $a<b$, and $f_k$ for a local face viewed as a quotient-face
generator. The local boundaries are
\[
\begin{aligned}
\partial_2 f_0&=e_{12}-e_{13}+e_{23},&
\partial_2 f_1&=e_{02}-e_{03}+e_{23},\\
\partial_2 f_2&=e_{01}-e_{03}+e_{13},&
\partial_2 f_3&=e_{01}-e_{02}+e_{12}.
\end{aligned}
\]
A zero tetrahedron column in $B_{\rm edge}$ means
\begin{equation}
e_{03}+e_{12}=e_{02}+e_{13}
\label{eq:zerocolumn}
\end{equation}
in the free vector space on quotient edges. The tetrahedron sign
only exchanges the two sides. Thus both $f_0-f_1$ and $f_2-f_3$
belong to $\ker\partial_2$ for each tetrahedron. Since
$H_1(X;\Q)=0$ and $X$ has one vertex, $N$ edges and $2N$ faces,
\[
\rank\partial_2=N,\qquad\dim\ker\partial_2=N.
\]

Construct a multigraph whose vertices are the $2N$ quotient faces.
Tetrahedron $i$ contributes an edge $P_i$ joining $f_{i0}$ to
$f_{i1}$ and an edge $Q_i$ joining $f_{i2}$ to $f_{i3}$. Every
quotient face has two local occurrences, each used once, so the
graph is $2$-regular, counting a loop twice. If it has $k$
components, its incidence vectors span a space of dimension $2N-k$.
These vectors are the displayed face differences in $\ker\partial_2$,
so $2N-k\leq N$, or $k\geq N$.

A loop $P_i$ glues faces $0$ and $1$ of tetrahedron $i$ to each
other. The ordered gluing fixes their common edge $23$ pointwise.
Both sides of that local edge link are then paired to each other,
giving a quotient edge of degree one. A loop $Q_i$ similarly gives
degree one at edge $01$. Strict angles exclude these possibilities.
Every graph component therefore has at least two vertices. Together
with $k\geq N$, this forces exactly $N$ components, each with two
vertices and two parallel edges.

If a component consists of $P_i,P_j$, the two face gluings pair
faces $0,1$ of tetrahedron $i$ with faces $0,1$ of tetrahedron $j$.
Their common edges $23_i,23_j$ form a closed link cycle of length
two: both adjacent faces of both edge occurrences are already paired
within the component. Hence the resulting quotient edge has degree
two. A component of two $Q$ edges has the same property at edge
$01$. Strict angles exclude these cases as well. Every component
must therefore consist of one $P_i$ and one $Q_j$. In particular,
each tetrahedron appears exactly once as an index $i$ and once as
an index $j$ among these mixed components.

There are two possible ordered face pairings in a mixed component.
For $f_{i0}\sim f_{j2}$ and $f_{i1}\sim f_{j3}$, the induced edge
identifications are
\[
\begin{array}{lll}
e_{12}(i)=e_{01}(j),&e_{13}(i)=e_{03}(j),&e_{23}(i)=e_{13}(j),\\
e_{02}(i)=e_{01}(j),&e_{03}(i)=e_{02}(j),&e_{23}(i)=e_{12}(j).
\end{array}
\]
For $f_{i0}\sim f_{j3}$ and $f_{i1}\sim f_{j2}$, they are
\[
\begin{array}{lll}
e_{12}(i)=e_{01}(j),&e_{13}(i)=e_{02}(j),&e_{23}(i)=e_{12}(j),\\
e_{02}(i)=e_{01}(j),&e_{03}(i)=e_{03}(j),&e_{23}(i)=e_{13}(j).
\end{array}
\]
Both tables imply $e_{12}(i)=e_{02}(i)$ and
$e_{12}(j)=e_{13}(j)$. Since every tetrahedron occurs in each role,
$e_{02}=e_{12}=e_{13}$ in every tetrahedron. Equation~\eqref{eq:zerocolumn}
then forces $e_{03}$ to be the same quotient edge. This is equality
of basis vectors in $C_1(X;\Q)$, not merely equality of homology
classes. Denote the common edge by $c_i$.

Either pairing table gives $c_i=c_j$, $e_{01}(j)=c_i$, and
$e_{23}(i)=c_j$. Every face gluing belongs to a mixed component.
The tetrahedron adjacency graph is connected, so all local edges in
the triangulation represent a single quotient edge. Since their
number is $N$, it follows that $N=1$.

For one tetrahedron, \eqref{eq:facesign} allows only opposite-parity
faces to be paired. The two possibilities are $(01)(23)$ and
$(03)(12)$. The first glues faces $0,1$ together, already shown to
give degree one. In the second, the ordered gluing of face
$1=(023)$ to face $2=(013)$ fixes edge $03$ pointwise, again giving
degree one. This contradicts strict angles and proves
$B_{\rm edge}\ne0$.

Finally, the sum of all edge rows of $B_{\rm edge}$ is zero: each
tetrahedron has two occurrences of each shape type. If deleting one
row left a zero matrix, that row would be zero as well. Thus the
retained edge rows, and hence the longitude matrix $B$, are nonzero.
\end{proof}

For clarity, in our notation \cite[Definition~1.1]{Wong} requires
the following four conditions for generalized FAMED with respect
to the longitude:
\begin{enumerate}
\item the strict angle space is nonempty;
\item $\nullity B=2n<N$, where $n=\nullity C$;
\item $K_{NZ}$ and $K_F$ from Section~\ref{sec:singular} are row equivalent;
\item the two block matrices
\[
\begin{pmatrix}B_0&A_0\\0&K_{NZ}\end{pmatrix}
\quad\hbox{and}\quad
\begin{pmatrix}B_0&B_0G_F\\0&K_{NZ}\end{pmatrix}
\]
are row equivalent.
\end{enumerate}
Here $(B_0,A_0,K_{NZ})$ is obtained from the reduced row echelon
form of $(B\mid A)$, as in that definition, and our $G_F$ is its
matrix $\mathscr G$.

\begin{proof}[Proof of Theorem~\ref{thm:generalized}]
The first condition is the angle-structure hypothesis.
Theorem~\ref{thm:main} and Lemma~\ref{lem:nozero} give
\[\nullity B=2n=N-\rank B<N,\]
which is the second condition. The two row equivalences were proved
in Section~\ref{sec:singular}, including an explicit row operation
for the fourth condition.
\end{proof}

The rank lemma uses only the absence of degree-one and degree-two
edges, together with $H_1(X;\Q)=0$. It asserts that the full edge
block is nonzero; it does not assert that every tetrahedron column
is nonzero.

\begin{corollary}[Wong's upper bound without a FAMED hypothesis]\label{cor:upperbound}
Let $\mathscr A_{\mathcal T}$ be the nonempty strict angle space of
an ordered ideal triangulation of a hyperbolic knot complement in
$S^3$. For $\alpha\in\mathscr A_{\mathcal T}$, let
$\mathscr Z_\hbar(\mathcal T,\alpha)$ be its Teichm\"uller TQFT
partition function, and let $\mathrm H_\lambda^{\R}$ denote angular
longitude holonomy, with the conventions of \cite{Wong}. Then
\[
\limsup_{\hbar\to0^+}2\pi\hbar\log|\mathscr Z_\hbar(\mathcal T,\alpha)|
\leq-\sup_{\substack{\alpha'\in\mathscr A_{\mathcal T}\\
\mathrm H_\lambda^{\R}(\alpha')=\mathrm H_\lambda^{\R}(\alpha)}}
\operatorname{Vol}(\alpha').
\]
Here $\operatorname{Vol}$ is the sum of the ideal-tetrahedron volumes
determined by the angles.
\end{corollary}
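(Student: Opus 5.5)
The plan is to feed Theorem~\ref{thm:generalized} into Wong's analytic theorem. In \cite{Wong}, the upper bound on the exponential growth of $|\mathscr Z_\hbar(\mathcal T,\alpha)|$ is proved for triangulations that are generalized FAMED with respect to the longitude. The bound is that the limsup is at most minus the supremum of volumes over angle structures with the same angular longitude holonomy. Theorem~\ref{thm:generalized} shows that this hypothesis holds automatically for every ordered ideal triangulation of a hyperbolic knot complement in $S^3$ with nonempty strict angle space. So the corollary should follow by direct citation once the hypotheses and conventions are aligned.

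The steps, in order, are as follows.
\begin{enumerate}
\item Fix the data. Fix $\mathcal T$, the preferred longitude $\lambda$ with its unhalved signed-corner row, and the matrices $(C,D,R,E)=(\mathscr A,\mathscr B,\mathscr X_0,\mathscr E)$ as in the dictionary of Section~1.1.
\item Verify Wong's four conditions. The first is the nonemptiness of $\mathscr A_{\mathcal T}$. The second is $\nullity B=2n<N$, which comes from Theorem~\ref{thm:main}(1) and Lemma~\ref{lem:nozero}. The third and fourth are the row equivalences of Section~\ref{sec:singular}, with $G_F=\mathscr G$.
\item Invoke Wong's upper-bound theorem verbatim for any $\alpha\in\mathscr A_{\mathcal T}$. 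Nothing in that theorem depends on the original determinant-one FAMED assertion, and Section~\ref{sec:singular} handles the singular case uniformly.
\item Identify $\operatorname{Vol}(\alpha')$ with the sum of Lobachevsky volumes of the angled tetrahedra. Identify $\mathrm H_\lambda^{\R}$ with the angular part of the longitude row $(A\mid B)$ evaluated on $\alpha'$, i.e.\ the real-angle holonomy in Wong's normalization.
\end{enumerate}

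The main obstacle is convention matching rather than new mathematics. Wong's theorem is stated for his matrices, longitude normalization, and shape placement. The constrained formulation there uses the reduced row echelon form of $(B\mid A)$ in a specific way, so I must check three things. First, our $(B_0,A_0,K_{NZ})$ is exactly his. Second, the sign or orientation ambiguity of $\lambda$ does not affect the constraint $\mathrm H_\lambda^{\R}(\alpha')=\mathrm H_\lambda^{\R}(\alpha)$; reversing $\lambda$ negates both sides. Third, using the full unhalved row rather than a halved one only rescales the holonomy constraint. Rescaling leaves the set $\{\alpha':\mathrm H_\lambda^{\R}(\alpha')=\mathrm H_\lambda^{\R}(\alpha)\}$ unchanged, even though it would matter for the integral statements. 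I would check these three points against Section~1.1 and the figure-eight example (Example~\ref{ex:figureeight}) before citing the theorem.
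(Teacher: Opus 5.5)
Your proposal is correct and matches the paper's proof: Theorem~\ref{thm:generalized} supplies the generalized FAMED hypothesis for the first clause of \cite[Theorem~1.11]{Wong}, and that theorem then gives the inequality directly with no further estimate. Your convention checks are harmless extras. In particular, the paper's longitude row is already the unhalved one used in \cite{Wong}, so the rescaling concern does not arise.
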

\begin{proof}
Theorem~\ref{thm:generalized} supplies the generalized FAMED
hypothesis in the first clause of \cite[Theorem~1.11]{Wong}. That
analytic theorem gives the inequality. No further estimate is needed.
\end{proof}

\begin{corollary}[The partition-function volume limit]\label{cor:volumelimit}
Suppose $\mathcal T$ is an ordered complete geometric ideal
triangulation of a hyperbolic knot complement $M=S^3\smallsetminus K$,
and let $\alpha_{\rm geom}$ be its geometric angle structure.
For every strict angle structure $\alpha$ with
$\mathrm H_\lambda^{\R}(\alpha)=
\mathrm H_\lambda^{\R}(\alpha_{\rm geom})$, one has
\[
\lim_{\hbar\to0^+}2\pi\hbar
\log|\mathscr Z_\hbar(\mathcal T,\alpha)|=-\operatorname{Vol}(M).
\]
In particular, the conclusion holds for $\alpha=\alpha_{\rm geom}$.
It allows singular face matrices.
\end{corollary}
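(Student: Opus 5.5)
The plan is to deduce the statement from Wong's analytic theorem, with Theorem~\ref{thm:generalized} supplying the generalized FAMED hypothesis. In addition we need a matching lower bound and the fact that the supremum of volumes over the holonomy slice equals $\operatorname{Vol}(M)$. I will first recall that a complete geometric ideal triangulation carries the geometric angle structure $\alpha_{\rm geom}$. This structure is strict, since all shapes have positive imaginary part, so $\mathscr A_{\mathcal T}$ is nonempty. Theorem~\ref{thm:generalized} therefore applies, and we use no invertibility of $C$; this is why singular face matrices are allowed.

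\textbf{Upper bound.} Apply Corollary~\ref{cor:upperbound} to $\alpha$. By hypothesis $\alpha_{\rm geom}$ lies in the slice $\{\alpha':\mathrm H_\lambda^{\R}(\alpha')=\mathrm H_\lambda^{\R}(\alpha)\}$. By the Casson--Rivin maximum-volume principle, the volume functional on the full strict angle space is maximized uniquely at the complete structure, with value $\operatorname{Vol}(M)$. Hence the supremum over the slice equals $\operatorname{Vol}(M)$, and
\[
\limsup_{\hbar\to0^+}2\pi\hbar\log|\mathscr Z_\hbar(\mathcal T,\alpha)|\le-\operatorname{Vol}(M).
\]

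\textbf{Lower bound.} Here I would invoke the second clause of \cite[Theorem~1.11]{Wong}, or its stated corollary for geometric triangulations. As I understand it, under the generalized FAMED hypothesis and the existence of a geometric (complete) shape solution compatible with the angular longitude holonomy, that clause identifies the exponential growth rate exactly as minus the volume. Its proof is a saddle-point analysis at the complete structure. To apply it, I must verify that its remaining hypotheses are exactly what is given here: completeness, the geometric angle structure, and equality of the real longitude holonomy with that of $\alpha$. I will also note that $\mathrm H_\lambda^{\R}(\alpha_{\rm geom})=0$ for the complete structure, so the condition simply fixes the slice through the geometric point. Combining the two bounds gives the limit, and taking $\alpha=\alpha_{\rm geom}$ gives the final sentence.

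\textbf{Main obstacle.} The main obstacle is the lower bound. It requires that Wong's theorem really contains an equality statement whose only nontrivial input is generalized FAMED. If instead it assumes extra conditions, such as the longitude--meridian conditions of Definition~1.5 there, which the remark after Theorem~\ref{thm:generalized} does not claim, I would try to supply them, or else check that the geometric case avoids them. Failing that, I would fall back on proving the lower bound directly. The route would be to rewrite $\mathscr Z_\hbar$ in face variables, using the affine coordinates of Section~\ref{sec:affine}, as an integral whose phase is the Neumann--Zagier potential restricted to $W$. The nondegeneracy of the Hessian at the complete structure would then come from the constrained symmetric form $G_F$ of Section~\ref{sec:singular}. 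I expect the citation route to suffice, but checking its exact hypotheses is where care is needed.
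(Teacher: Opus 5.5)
Your proposal follows the paper's route in substance. Theorem~\ref{thm:generalized} supplies generalized FAMED without any invertibility of $C$, and the second clause of \cite[Theorem~1.11]{Wong} then gives the limit directly. Because that clause already yields an equality, your separate upper bound via Corollary~\ref{cor:upperbound} and the Casson--Rivin maximum-volume principle is correct but unnecessary. Your worry about the longitude--meridian conditions of \cite[Definition~1.5]{Wong} is also misplaced: those conditions are needed only for the Jones-function assertions, so no fallback saddle-point analysis is required.

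The step you leave as ``I must verify'' is, however, the entire content of the paper's proof. Your list of hypotheses (completeness, the geometric angle structure, equal angular holonomies) is not yet in the form Wong's clause requires. That clause needs a shape solution $\mathbf z$ with $\mathrm H_\lambda^{\mathbb C}(\mathbf z)=i\,\mathrm H_\lambda^{\R}(\alpha)$. Take the complete shapes $\mathbf z_{\rm geom}$, all with positive imaginary part and satisfying the edge equations on the geometric logarithmic branch.

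\begin{itemize}
\item Completeness makes the real part of their logarithmic longitude holonomy vanish.
\item Its imaginary part is exactly the angular holonomy of $\alpha_{\rm geom}$. This requires using the same normal representative in both, and retaining in both the additive constants from simplifying corner expressions.
\end{itemize}

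Hence $\mathrm H_\lambda^{\mathbb C}(\mathbf z_{\rm geom})=i\,\mathrm H_\lambda^{\R}(\alpha_{\rm geom})=i\,\mathrm H_\lambda^{\R}(\alpha)$. The resulting limit is minus the sum of the tetrahedral volumes, which is $\operatorname{Vol}(M)$.

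Your side remark that $\mathrm H_\lambda^{\R}(\alpha_{\rm geom})=0$ depends on convention. It holds only when the logarithmic lift is normalized to vanish at the complete structure, since exponentiated completeness does not fix the lift. The argument does not need it.
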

\begin{proof}
Theorem~\ref{thm:generalized} supplies generalized FAMED. Let
$\mathbf z_{\rm geom}$ denote the complete shapes, all with positive
imaginary parts. They satisfy the edge equations on the geometric
logarithmic branch. Completeness makes the real part of their
logarithmic longitude holonomy zero, and the imaginary part is the
angular holonomy of \cite[Section~2.2]{Wong}. Any constants introduced
by simplifying the corner expressions are retained in both holonomies. Thus
\[
\mathrm H_\lambda^{\mathbb C}(\mathbf z_{\rm geom})
=i\mathrm H_\lambda^{\R}(\alpha_{\rm geom})
=i\mathrm H_\lambda^{\R}(\alpha).
\]
The additional shape-solution hypothesis of the second clause of
\cite[Theorem~1.11]{Wong} is therefore satisfied. Its logarithmic
limit gives the conclusion, since the sum of these tetrahedral
volumes is $\operatorname{Vol}(M)$.
\end{proof}
The angular holonomies in this corollary use the same normal
peripheral representative and logarithmic convention. When the lift
is normalized to have zero holonomy at the complete structure, its
hypothesis reads $\mathrm H_\lambda^{\R}(\alpha)=0$. Exponentiated
completeness alone does not select a logarithmic lift.

These consequences use Wong's analytic theorem. They do not establish
the existence of a complete geometric triangulation for every knot,
or the additional longitude--meridian conditions in
\cite[Definition~1.5]{Wong} needed for its Jones-function assertions.

\section{An affine face formulation of the gluing equations}\label{sec:affine}
We now express the lifted gluing equations in face coordinates. Related correspondences between logarithmic gluing equations and critical-point equations appear in \cite[Proposition~3.4]{BW} and \cite[Lemma~3.6 and Proposition~3.9]{Wong}. The construction here uses the face-cochain isomorphism proved above; it does not require a potential function or invertibility of $C$.

Give each quotient face two complex coordinates $x_f,y_f$. Define the complex linear space $\mathcal F$ by
\begin{equation}
\begin{aligned}
(X_0-X_1+X_2)x&=0,\\
(X_3-X_2+X_1)y&=0,\\
(X_2-X_3)x&=(X_1-X_0)y.
\end{aligned}\label{eq:affineface}
\end{equation}
Set
\[
t=E(X_2-X_3)x,\qquad s=Rx+\Rt y+\Delta t.
\]
These are the simultaneous face equations with $t$ eliminated. By the duality and injectivity arguments in Sections~\ref{sec:peripheral}--\ref{sec:injectivity}, the map
\[
\Phi:\mathcal F\longrightarrow\ker_{\mathbb C}(A\mid-B),\qquad (x,y)\longmapsto(t,s)
\]
is a complex linear isomorphism. Thus $\dim_{\mathbb C}\mathcal F=N$, including when $C$ is singular.

\subsection{The lifted solution set}
Fix $u\in\mathbb C$ and $\kappa\in\Z^N$. The first $N-1$ entries of $\kappa$ are even, and are $2$ for the usual geometric edge equations. Fix also the representative and additive constant in the longitude formula: if
\[
H_\lambda=i\pi c_\lambda+(GZ+G'Z'+G''Z'')_N,
\qquad c_\lambda\in\Z,
\]
then take $\kappa_N=-c_\lambda$, so that the last equation below prescribes $H_\lambda=u$.

Let $\mathcal S_{\kappa,u}$ consist of triples $(Z,Z',Z'')\in\mathbb C^{3N}$ such that, for every $i$,
\[
z_i=e^{Z_i}\notin\{0,1\},\qquad
e^{Z'_i}=\frac{1}{1-z_i},\qquad
e^{Z''_i}=1-\frac{1}{z_i},
\]
and
\[
Z+Z'+Z''=i\pi\mathbf1,\qquad
GZ+G'Z'+G''Z''=i\pi\kappa+u e_N.
\]
Here $e_N$ is the last standard basis vector of $\mathbb C^N$. These are normalized logarithmic lifts: the variables are not required to be principal logarithms. No positivity of the shapes or existence of a solution is assumed.

Put $\eta=\kappa-G'\mathbf1$ and choose rational vectors satisfying
\[
Ap_0-Br_0=\eta,\qquad Aq-Br=e_N.
\]
Such choices exist by the full row rank of $(A\mid B)$. For $(x,y)\in\mathcal F$ with $\Phi(x,y)=(t,s)$, define
\begin{equation}
Z=i\pi p_0+t+u q,\qquad S=i\pi r_0+s+u r.
\label{eq:affineZS}
\end{equation}

\begin{proposition}\label{prop:affine}
For each fixed $\kappa$ and $u$ as above, the subset of $\mathcal F$ satisfying
\begin{equation}
\exp(Z_i)\big(1-\exp(-S_i)\big)=1,\qquad i=1,\ldots,N,
\label{eq:nonlinearface}
\end{equation}
is in bijection with $\mathcal S_{\kappa,u}$. The bijection sends $(x,y)$ to
\[
(Z,Z',Z'')=(Z,\ i\pi\mathbf1-Z+S,\ -S).
\]
\end{proposition}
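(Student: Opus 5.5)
The bijection factors as $(x,y)\mapsto(t,s)\mapsto(Z,S)\mapsto(Z,Z',Z'')$, and I will check that each arrow is bijective onto the appropriate set.

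The first arrow is the complex isomorphism $\Phi:\mathcal F\to\ker_{\mathbb C}(A\mid-B)$ already established. The second arrow is the affine map \eqref{eq:affineZS}. Because $Ap_0-Br_0=\eta$ and $Aq-Br=e_N$, it carries $\ker(A\mid -B)$ bijectively onto the affine space
\[
\{(Z,S):AZ-BS=i\pi\eta+ue_N\},
\]
with inverse $(t,s)=(Z-i\pi p_0-uq,\ S-i\pi r_0-ur)$. So the plan is to show that this affine space, intersected with the nonlinear conditions \eqref{eq:nonlinearface}, corresponds exactly to $\mathcal S_{\kappa,u}$ under the last map.

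For the linear part, start with a triple in $\mathcal S_{\kappa,u}$ and put $S=-Z''$. The tetrahedral relation gives $Z'=i\pi\mathbf1-Z+S$. Substituting into the gluing equation gives
\[
GZ+G'(i\pi\mathbf1-Z+S)-G''S=i\pi\kappa+ue_N,
\]
that is, $AZ-BS=i\pi(\kappa-G'\mathbf1)+ue_N=i\pi\eta+ue_N$. Each step is reversible. Hence $(Z,S)\mapsto(Z,\ i\pi\mathbf1-Z+S,\ -S)$ identifies the affine space above with the triples satisfying both linear equations of $\mathcal S_{\kappa,u}$. Here I would remark that $\kappa_N=-c_\lambda$ is exactly what makes the last row prescribe $H_\lambda=u$, and that no elimination of edge rows beyond the one already used in defining $A,B$ is involved.

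For the nonlinear part, I will show that, given the linear relations, the three exponential conditions are equivalent to \eqref{eq:nonlinearface} together with $z_i\notin\{0,1\}$. If $e^{Z''}=1-1/z$, then $e^{Z}(1-e^{-S})=z\cdot(1/z)=1$. Conversely, assume \eqref{eq:nonlinearface}. Put $z=e^Z$; it is nonzero automatically. From \eqref{eq:nonlinearface}, $e^{-S}=1-1/z$, so $e^{Z''}=e^{-S}=1-1/z$. This also forces $z\neq1$: otherwise $e^{-S}=0$, which is impossible. Finally, $e^{Z'}=-e^{-Z}e^{S}=-1/(z(1-1/z))=1/(1-z)$, using $e^{i\pi}=-1$. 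So the $z\notin\{0,1\}$ condition is automatic.

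Composing the three bijections proves the proposition. The only delicate point is bookkeeping rather than mathematics: the sign and placement conventions must match. In particular, $Z''=-S$ must be consistent with \eqref{eq:shapes}, where $\delta\log z''=-s$, and the constant $G'\mathbf1$ in $\eta$ must appear with the right sign.
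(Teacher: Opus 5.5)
Your proof is correct and follows the paper's own argument. The paper likewise derives $AZ-BS=i\pi\eta+ue_N$ from the particular solutions and rewrites the gluing equation as $AZ-BS+i\pi G'\mathbf1$. It obtains $z_i\neq1$ and the two exponential shape relations from \eqref{eq:nonlinearface}, and it inverts by setting $S=-Z''$, subtracting the particular solutions and applying $\Phi^{-1}$; your factorization into three bijections is a tidier packaging of the same steps.
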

\begin{proof}
The affine substitutions give $AZ-BS=i\pi\eta+u e_N$. Hence the displayed triple has tetrahedral sum $i\pi\mathbf1$ and satisfies
\[
GZ+G'Z'+G''Z''=AZ-BS+i\pi G'\mathbf1
=i\pi\kappa+u e_N.
\]
Equation~\eqref{eq:nonlinearface} gives $z_i(1-z_i'')=1$ with $z_i=e^{Z_i}$ and $z_i''=e^{-S_i}$. Both exponentials are nonzero, so $z_i\ne1$, $z_i''=1-1/z_i$, and
\[
e^{Z_i'}=e^{i\pi-Z_i+S_i}=\frac{1}{1-z_i}.
\]
Thus the triple belongs to $\mathcal S_{\kappa,u}$.

Conversely, let $(Z,Z',Z'')\in\mathcal S_{\kappa,u}$ and put $S=-Z''$. The tetrahedral sums give $Z'=i\pi\mathbf1-Z+S$. Subtracting the two particular solutions in \eqref{eq:affineZS} produces
\[
(t,s)=(Z-i\pi p_0-uq,\ S-i\pi r_0-ur)\in\ker_{\mathbb C}(A\mid-B).
\]
The isomorphism $\Phi$ gives a unique $(x,y)\in\mathcal F$. The shape relation for $e^{Z''}$ gives \eqref{eq:nonlinearface}, establishing the inverse.
\end{proof}

\subsection{Auxiliary choices and logarithmic lifts}
The choices of $p_0,r_0,q,r$ affect only the affine origin. If they are replaced by $\widetilde p_0,\widetilde r_0,\widetilde q,\widetilde r$ satisfying the same equations, then
\[
\delta_u=i\pi(p_0-\widetilde p_0,r_0-\widetilde r_0)
+u(q-\widetilde q,r-\widetilde r)
\in\ker_{\mathbb C}(A\mid-B).
\]
Translation by $\Phi^{-1}(\delta_u)$ identifies the two face solution sets and leaves the resulting triple $(Z,Z',Z'')$ unchanged. This assertion keeps $\kappa$ and $u$ fixed.

The omitted edge equation has forced logarithmic constant
\[
i\pi\left(2N-\sum_{j=1}^{N-1}\kappa_j\right),
\]
because each tetrahedron contributes twice its logarithmic sum to the sum of all edge equations. It is $2\pi i$ when every retained edge constant is $2\pi i$.

A fixed $\kappa$ specifies lifted linear constraints, not a unique logarithmic branch. For example, whenever $a,b\in\Z^N$ satisfy $Aa-Bb=0$, replacing $(Z,S)$ by $(Z+2\pi i a,S+2\pi i b)$ preserves $\kappa$, $u$, and all exponentiated shapes. The proposition is a bijection of the specified lifted solution sets; it does not assert a bijection after forgetting logarithmic lifts.

At prescribed $u$, all unknowns in \eqref{eq:affineface} and \eqref{eq:nonlinearface} are attached to quotient faces. Positive shapes can be selected by imposing $\operatorname{Im}(e^{Z_i})>0$; a prescribed principal-logarithm domain requires its corresponding restrictions on $Z,Z',Z''$ as well. To impose both peripheral completeness equations, substitute \eqref{eq:affineZS} into the meridian equation. Setting $u=0$ alone is not a global assertion of meridian completeness.

\subsection{Figure-eight constants}
For the ordering in Example~\ref{ex:figureeight}, the gluing data in \cite[Figure~4]{BGW} and \cite[Section~2]{RIMS} give
\[
A=\begin{pmatrix}1&-2\\0&4\end{pmatrix},\qquad
B=\begin{pmatrix}-1&-1\\0&2\end{pmatrix},\qquad
\kappa=\begin{pmatrix}2\\-2\end{pmatrix},\qquad
\eta=\begin{pmatrix}-1\\2\end{pmatrix}.
\]
The longitude constant here is $c_\lambda=2$. One may choose
\[
p_0=\begin{pmatrix}1/3\\1/3\end{pmatrix},\qquad
r_0=\begin{pmatrix}-1/3\\-1/3\end{pmatrix},\qquad
q=\begin{pmatrix}1/2\\1/4\end{pmatrix},\qquad r=0.
\]
Direct multiplication gives $Ap_0-Br_0=\eta$ and $Aq-Br=e_2$. At $x=y=0$ and $u=0$, all three logarithms in each tetrahedron equal $i\pi/3$, recovering the regular shapes $z_i=z_i'=z_i''=e^{i\pi/3}$. This example checks the additive longitude constant as well as the sign of $S=-Z''$.

\section{Integral reversal symmetry}\label{sec:reversal}

\begin{theorem}\label{thm:integral-reversal}
For every ordered oriented face pairing, the integral matrices $C$ and
$\Ct$ have the same Smith normal form.
\end{theorem}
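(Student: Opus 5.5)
The plan is to recast both $C$ and $\Ct$ as maps between lattices built from the unimodular pairing \eqref{eq:pair}. I then show that each is equivalent, up to unimodular changes of basis and transposition, to a map whose cokernel is the same group $V_\Z/(F_\Z+L_\Z)$. Throughout, $V_\Z=\Z^{4N}$ is the lattice of integral local face assignments, $F_\Z=\jmath(\Z^{2N})$, $L_\Z=V_\Z\cap L$ and $\overline L_\Z=V_\Z\cap L^\perp$.

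\textbf{Step 1: lattice facts.}
\begin{itemize}
\item The pairing \eqref{eq:pair} is diagonal with entries $\pm1$, so it is unimodular on $V_\Z$.
\item $F_\Z$ is primitive, since it is a coordinate-duplication image. It is isotropic of rank $2N$, so unimodularity gives $F_\Z^\perp=F_\Z$ integrally. Hence $V_\Z/F_\Z\cong\operatorname{Hom}(F_\Z,\Z)$ via $v\mapsto\langle v,\cdot\rangle|_{F_\Z}$.
\item $L_\Z$ is primitive, and $\overline L_\Z=L_\Z^\perp$ in $V_\Z$, tetrahedron by tetrahedron. Hence $V_\Z/\overline L_\Z\cong\operatorname{Hom}(L_\Z,\Z)$ by the same rule.
\end{itemize}
The local maps $(x_0,\dots,x_3)\mapsto(x_0-x_1+x_2,\;x_2-x_3)$ and $(y_0,\dots,y_3)\mapsto(y_3-y_2+y_1,\;y_1-y_0)$ are surjective onto $\Z^2$, with kernels $L_{i,\Z}$ and $\overline L_{i,\Z}$. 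They therefore give integral bases of $V_\Z/L_\Z$ and $V_\Z/\overline L_\Z$.

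\textbf{Step 2: the two face matrices as lattice maps.}
\begin{itemize}
\item In these bases, $C$ is the matrix of the composite $\alpha\colon F_\Z\hookrightarrow V_\Z\to V_\Z/L_\Z$.
\item $\Ct$ is the matrix of $\bar\alpha\colon F_\Z\to V_\Z/\overline L_\Z$, up to left and right multiplication by unimodular matrices.
\item Under the dualities of Step 1, $\bar\alpha$ becomes $x\mapsto\langle x,\cdot\rangle|_{L_\Z}$, a map $F_\Z\to\operatorname{Hom}(L_\Z,\Z)$. This is the transpose of $\beta\colon L_\Z\hookrightarrow V_\Z\to V_\Z/F_\Z\cong\operatorname{Hom}(F_\Z,\Z)$.
\item So $\Ct$ is unimodularly equivalent to $\beta^{\mathsf T}$. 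Transposition preserves Smith normal form, so it suffices to compare $\alpha$ and $\beta$.
\end{itemize}

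\textbf{Step 3: compare $\alpha$ and $\beta$.} Both are maps between free $\Z$-modules of rank $2N$, and both have cokernel $V_\Z/(F_\Z+L_\Z)$. For a square integer matrix, the Smith normal form is determined by its cokernel. The invariant factors are the torsion invariants, padded with $1$'s, and the number of zero diagonal entries equals the free rank of the cokernel. Hence $\alpha$ and $\beta$ have the same Smith normal form, and so do $C$ and $\Ct$. As a consistency check, the rational part recovers the equality of nullities from Section~\ref{sec:pairing}.

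\textbf{Main obstacle.} The delicate point is Step 2: verifying that the identifications are integral isomorphisms and not merely rational ones. This needs three things:
\begin{itemize}
\item integrality of the equality $F_\Z^\perp=F_\Z$;
\item surjectivity of the local coordinate maps onto $\Z^2$;
\item that the adjoint of $\bar\alpha$ under the unimodular pairing is exactly $\beta$, with the signs $\eps_i$ absorbed into a diagonal $\pm1$ basis change.
\end{itemize}
For the first, I would check that $F_\Z^\perp$ is the saturation of $F_\Z$. It contains $F_\Z$, it has rank $2N$ because the pairing is nondegenerate, and its elements are exactly the assignments $v$ whose two paired local values satisfy the signed cancellation relation. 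By \eqref{eq:facesign}, those are the face-constant assignments. The remaining two checks are local $4\times4$ computations.
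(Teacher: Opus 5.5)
Your proof is correct and is essentially the paper's argument without coordinates. The paper's exact sequence $0\to\Z^{2N}\to\Z^{4N}\to\Z^{2N}\to0$ is your identification $V_\Z/F_\Z\cong\operatorname{Hom}(F_\Z,\Z)$ from $F_\Z=F_\Z^\perp$, and its map $\Ct^{\mathsf T}H$ is, up to sign, your $\beta$; its isomorphism $\coker C\to\coker(\Ct^{\mathsf T}H)$ is your observation that both cokernels equal $V_\Z/(F_\Z+L_\Z)$, except that the paper works in the combined unimodular local coordinates $(a,b,c,d)$ (which encode $V_\Z=L_\Z\oplus\overline L_\Z$), whereas you need only that each local constraint map is surjective and that $\overline L_\Z=L_\Z^\perp$.
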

\begin{proof}
We first make the local change of coordinates explicit. For four local
face values $x_0,x_1,x_2,x_3$, write
\[
 a=x_0-x_1+x_2,\qquad b=x_2-x_3,\qquad
 c=x_3-x_2+x_1,\qquad d=x_1-x_0.
\]
The first pair is the original constraint and the second pair is the
reversed constraint. The inverse is integral:
\[
 x_0=b+c-d,\qquad x_1=b+c,\qquad
 x_2=a+d,\qquad x_3=a+d-b.
\]
Consequently this change of coordinates is unimodular. Put
\[
 h=\begin{pmatrix}0&1\\1&-1\end{pmatrix},\qquad
 H=\begin{pmatrix}0&E\\E&-E\end{pmatrix}.
\]
A direct expansion gives the local bilinear identity
\[
 \eps\sum_{k=0}^3(-1)^k x_k x'_k
 =\eps(a,b)h(a',b')^{\mathsf T}
  -\eps(c,d)h(c',d')^{\mathsf T}.
\]
Thus, after collecting the original and reversed constraints into separate
blocks, the global pairing has matrix $\diag(H,-H)$.

Let $\mathscr F_{\Z}\subset\Z^{4N}$ be the lattice of local assignments
constant across each paired face. The two diagonal entries of the original
pairing corresponding to a paired face are $\sigma,-\sigma$, with
$\sigma\in\{1,-1\}$. On this pair, pairing with a constant assignment
$(r,r)$ is $\sigma r(u-v)$. Hence the pairing map onto
$\mathscr F_{\Z}^{*}$ is surjective, and its kernel is exactly
$\mathscr F_{\Z}$. Applying the unimodular coordinate change gives an
exact sequence
\[
 0\longrightarrow\Z^{2N}
 \xrightarrow{\binom C{\Ct}}\Z^{4N}
 \xrightarrow{(C^{\mathsf T}H\ \ -\Ct^{\mathsf T}H)}\Z^{2N}
 \longrightarrow0.
\]
In particular,
\[
 C^{\mathsf T}HC=\Ct^{\mathsf T}H\Ct.
\]
Define
\[
 \Phi:\coker C\longrightarrow\coker(\Ct^{\mathsf T}H),
 \qquad [y]\longmapsto[C^{\mathsf T}Hy].
\]
The displayed identity makes $\Phi$ well defined. Surjectivity follows
from the last map in the exact sequence: every vector is congruent to a
vector $C^{\mathsf T}Hy$ modulo $\Ct^{\mathsf T}H\Z^{2N}$. If
$C^{\mathsf T}Hy=\Ct^{\mathsf T}Hz$, exactness gives
$(y,z)=(Cx,\Ct x)$ for some integral $x$, so $[y]=0$. Thus $\Phi$ is an
isomorphism. Since $H$ is unimodular and transposition preserves Smith
normal form, the theorem follows.
\end{proof}

Reversal preserves each opposite-edge pair and the tetrahedron orientation
sign. Retaining the quotient-edge labels and the same normal representative
of the peripheral curve therefore retains $A$ and $B$. Together with
\eqref{eq:transpose} and Section~\ref{sec:singular}, this also proves
reversal invariance of the constrained face form. Theorem~\ref{thm:integral-reversal}
strengthens the numerical reversal observation in Section~3.3 of
\cite{BGW} to an equality of integral Smith normal forms.

\section{Integral lifts, the cokernel sequence, and determinants}\label{sec:integral}

We now prove Theorems~\ref{thm:integral} and~\ref{thm:det}. All kernels
and cokernels in this section are taken over $\Z$ unless another
coefficient ring is displayed. Put
\[
 K=\begin{pmatrix}C&0&-D\\0&\Ct&-D\end{pmatrix},\qquad
 F_0=\begin{pmatrix}C&0\\0&\Ct\\R&\Rt\end{pmatrix}.
\]
The ideal compactification $X=M/\partial M$ has one vertex and
\[
 H_1(X;\Z)\cong H_1(M,\partial M;\Z)
 \cong H^2(M;\Z)=0.
\]
Here the quotient identification uses connected boundary, the second
isomorphism is Poincar\'e--Lefschetz duality, and the last equality follows
from the integral homology of a knot exterior.

\subsection{The cokernel of the simultaneous constraints}

\begin{lemma}\label{lem:simultaneous-presentation}
There is an integral presentation
\begin{equation}
 \coker K\cong H_1(X;\Z).
 \label{eq:kcoker}
\end{equation}
\end{lemma}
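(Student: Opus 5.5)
The plan is to show that $K$ is \emph{stably equivalent} to the cellular boundary map $\partial_2:C_2(X;\Z)\to C_1(X;\Z)$. That is, after unimodular row and column operations and after adding or removing identity blocks, $K$ becomes $\partial_2$. Since $X$ has a single vertex, $\partial_1=0$, so $\coker\partial_2=H_1(X;\Z)$, and stable equivalence preserves cokernels. The sizes are consistent with this: $K$ is $4N\times5N$ and $\partial_2$ is $N\times2N$, so the difference should be an identity block of size $3N$. The model is the integral version of the cochain correspondence of Section~\ref{sec:injectivity}. There, pairs $(x,y)$ with $Cx=\Ct y=0$ and $t=0$ were identified with cellular cocycles through $x_f=h(ab)$, $y_f=-h(bc)$. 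The present lemma is the presentation-level (chain) counterpart of that identification, now including the inhomogeneous column $-D$.

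\emph{Step 1: eliminate $t$.} For each tetrahedron $i$, the rows of $K$ are the four local expressions
\[
x_0-x_1+x_2,\qquad x_2-x_3-\eps_it_i,\qquad y_3-y_2+y_1,\qquad y_1-y_0-\eps_it_i,
\]
with $x_k=(X_kx)_i$ and $y_k=(X_ky)_i$. The column $t_i$ meets only the two rows $b_i$ and $d_i$, each with unit coefficient $-\eps_i$. A column operation clears $t_i$ from $d_i$, after which the row $b_i$ contains a unit pivot in the column $t_i$ alone. Deleting this pivot row and column changes the cokernel only up to isomorphism. This removes $N$ rows and $N$ columns and leaves, for each tetrahedron, the three rows
\[
x_0-x_1+x_2,\qquad y_3-y_2+y_1,\qquad (y_1-y_0)-(x_2-x_3).
\]

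\emph{Step 2: pass to edge-occurrence coordinates.} For each quotient face $f$ with ordered corners $a<b<c$, I will introduce the three edge values $h_f(ab)$, $h_f(bc)$, $h_f(ac)$, subject to the triangle relation $h_f(ab)+h_f(bc)=h_f(ac)$, and set $x_f=h_f(ab)$, $y_f=-h_f(bc)$. Adjoining these auxiliary variables together with their defining relations is again an identity-block stabilization. In these coordinates the surviving rows should become exactly the relations identifying occurrences of a common tetrahedron edge across the faces that contain it:
\begin{itemize}
\item $x_1=x_0+x_2$ and $y_2=y_1+y_3$ identify the values on edges $01$, $12$, $23$ with the ones seen on adjacent faces, as in the table \eqref{eq:qtable} at $t=s=0$;
\item the mixed row supplies the remaining identification of the two occurrences of a single tetrahedron edge.
\end{itemize}
Quotienting the free abelian group on face-edge occurrences by these identifications should give the free group on quotient edges, $C_1(X;\Z)$. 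The triangle relations then map to $\partial_2$ of the quotient faces, with the orientation conventions of Lemma~\ref{lem:nozero}.

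\emph{Step 3: the main obstacle.} The hard part will be the bookkeeping in Step~2. I must check that there are exactly $3N$ identification relations, one for each unit pivot removed. I must also check that, after eliminating the variables, the residual identifications generate precisely the relation ``same quotient edge'' in the free module. Counting occurrences separately makes this valid for self-glued faces and repeated edges. I would first verify this on a single tetrahedron with its six edges and four faces. I would then check that the gluing pairs match corners in an order-preserving way, so that no signs enter. As a consistency check, Section~\ref{sec:injectivity} already shows $\ker K^{\mathsf T}\otimes\Q$-type data matching $Z^1(X)$, which gives the correct rank, and the figure-eight example with $H_1(X;\Z)=0$ requires $\coker K=0$. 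Having established stable equivalence, \eqref{eq:kcoker} follows.
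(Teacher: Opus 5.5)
The top-level plan, stable equivalence with $\partial_2$, is sound, and Step~1 is the paper's first move: the $t_i$ column imposes $q'_i=-q_i$ on the row generators. Clearing the $t_i$ entry from $d_i$ is the row operation $d_i\mapsto d_i-b_i$, not a column operation, but the three surviving rows you list are correct.

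The gap is in Step~2, which works on the wrong side of the presentation. In $\coker K=\Z^{4N}/K\Z^{5N}$ the generators are indexed by the \emph{rows} of $K$, and the relations by its \emph{columns} $x_f,y_f,t_i$. Step~2 does the opposite: it treats the face variables $x_f,y_f$ (rewritten through $h_f$) as generators and imposes the surviving rows as relations. Writing $K'$ for the $3N\times4N$ matrix left after Step~1, this presents $\Z^{4N}/\operatorname{row}(K')$, the cokernel of $K'^{\mathsf T}$. That group has rank at least $4N-3N=N$, so it is not $H_1(X;\Z)=0$. Nor can it be stably equivalent to $\partial_2$: generators minus relations is $+N$ there and $-N$ for $\partial_2$, which contradicts your own size count.

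The specific claims fail for the same reason. Three rows per tetrahedron ($3N$ in all) cannot generate the ``same quotient edge'' relation on the $6N$ face-edge occurrences, which has rank $5N$. The triangle relations are also used twice, first to define the $h$'s and then again as $\partial_2$. The dictionary $x_f=h(ab)$, $y_f=-h(bc)$ is a cochain dictionary. It fits kernel statements such as $\ker F_0\cong Z^1(X)$, where five rows per tetrahedron ($x_1=x_0+x_2$, $x_3=x_2$, $y_2=y_1+y_3$, $y_0=y_1$, $y_3=-x_0$) really do equate the two face values on each edge. After eliminating $t$, however, $K$ keeps only the single combination $(y_1-y_0)-(x_2-x_3)$.

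The missing idea is to read the row generators as chains. After $q'_i=-q_i$, substitute $p_i=\eps_ia_i$, $p'_i=\eps_ib_i$, $q_i=\eps_ic_i$, and factor the induced face sign $\eps_i(-1)^k$ out of each face column. By \eqref{eq:facesign} the two signs in a pairing are opposite. The $x_f$ and $y_f$ columns therefore become $F_k(i)=F_l(j)$ and $G_k(i)=G_l(j)$, where $(F_0,\dots,F_3)=(a,a,a+c,c)$ and $(G_0,\dots,G_3)=(c,c-b,-b,-b)$.

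With vertex potentials $(0,-b,c-b,a+c-b)$, $G_k$ and $F_k$ are the lower and upper consecutive edges of face $k$. So $\Z\langle a_i,b_i,c_i\rangle$ is exactly the group of the six directed edges of tetrahedron $i$ modulo its four triangle relations, with basis $e_{01}=-b$, $e_{12}=c$, $e_{23}=a$. The face columns then identify matching edges across paired faces; the third edge of each face follows by additivity. This gives $C_1(X;\Z)/\im\partial_2$, which is $H_1(X;\Z)$ because $\partial_1=0$.
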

\begin{proof}
Use four row generators $p_i,q_i,p'_i,q'_i$ for each tetrahedron. The last
$N$ columns of $K$ impose $q'_i=-q_i$. Make the unimodular substitutions
$p_i=\eps_i a_i$, $p'_i=\eps_i b_i$, and $q_i=\eps_i c_i$.
For a local face $k$, factor its induced sign $\eps_i(-1)^k$ out of
each of its two contributions. The remaining expressions are
\[
\begin{array}{c|rrrr}
 k&0&1&2&3\\\hline
 F_k&a&a&a+c&c\\
 G_k&c&c-b&-b&-b.
\end{array}
\]
The two induced face signs in a pairing are opposite by
\eqref{eq:facesign}. Thus the column relations identify the two $F$
values and the two $G$ values across each paired face.

Assign formal potentials to the vertices of one tetrahedron by
\[
 (v_0,v_1,v_2,v_3)=(0,-b,c-b,a+c-b).
\]
On each increasingly ordered face, $G_k$ is the difference along its
lower consecutive edge and $F_k$ is the difference along its upper
consecutive edge. Their sum is the difference along its remaining edge.
Conversely, any additive assignment to the six oriented edges of one
tetrahedron arises uniquely this way: take $a=h_{23}$, $b=-h_{01}$ and
$c=h_{12}$, and use the triangle relations for the other three edges.

The presentation therefore consists of local directed-edge generators,
triangle-boundary relations, and identifications of matching edges across
paired faces. It is $C_1(X;\Z)/\im\partial_2$. Because $X$ has one
vertex, $\partial_1=0$, which proves the assertion. Repeated incidences
and self-pairings merely identify some of these generators; the same
presentation remains valid.
\end{proof}

Define
\[
 \Gamma=\{t\in\Z^N:Dt\in C\Z^{2N}
                   \text{ and }Dt\in\Ct\Z^{2N}\}.
\]
The homomorphism
\[
 \alpha:\Z^N\longrightarrow\coker C\oplus\coker\Ct,
 \qquad t\longmapsto([Dt]_C,[Dt]_{\Ct})
\]
has kernel $\Gamma$. Its cokernel is $\coker K$, which vanishes by
Lemma~\ref{lem:simultaneous-presentation}. Consequently
\begin{equation}
 \Z^N/\Gamma\cong\coker C\oplus\coker\Ct.
 \label{eq:facequotient}
\end{equation}
This statement includes singular face matrices.

\subsection{Saturation of the face lifts}

\begin{lemma}\label{lem:primitive-lifts}
The map $F_0$ is injective, and its image is primitive in $\Z^{5N}$.
Equivalently, every integral vector in its rational image has an integral
preimage.
\end{lemma}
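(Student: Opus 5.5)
The plan is to show that $F_0$ stays injective after reduction modulo every prime $p$. For an integer matrix, injectivity over $\mathbb F_p$ for all $p$ is equivalent to injectivity over $\Q$ together with torsion-freeness of $\Z^{5N}/F_0\Z^{4N}$. Indeed, a torsion class in the cokernel gives an integral vector $F_0w$ divisible by some prime $p$ with $w\notin p\Z^{4N}$, so $w$ is a nonzero kernel vector modulo $p$. Torsion-freeness is the primitivity assertion. Its stated equivalent form follows because rational injectivity makes the rational preimage unique, and primitivity forces that preimage to be integral.

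Fix a field $\mathbb F$, either $\Q$ or $\mathbb F_p$, and suppose $(x,y)\in\mathbb F^{4N}$ satisfies $Cx=0$, $\Ct y=0$ and $Rx+\Rt y=0$.

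\emph{Step 1: all edge values vanish.} The second block rows of $Cx=0$ give $(X_2-X_3)x=0$, and likewise $(X_1-X_0)y=0$. Hence $x,y$ satisfy the local constraints of Section~\ref{sec:local} with $t=0$. Since $\Delta=(E+I)/2$ has integer entries, the quantity $s=Rx+\Rt y+\Delta t$ makes sense over $\mathbb F$ and equals $0$, even in characteristic $2$. Every identity used to derive \eqref{eq:qtable} is an integral linear identity in the face values, with no division. Therefore all $q_{jm}$ vanish over $\mathbb F$.

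\emph{Step 2: the face cochains descend to a cocycle on $X$.} I will repeat the argument of Section~\ref{sec:injectivity} with coefficients in $\mathbb F$. The vanishing of $q_{jm}$ says that the cochains $h_k,h_l$ of the two faces meeting at each tetrahedron edge agree on that edge. They also agree across paired faces, because the gluings preserve order. Following the link of each quotient edge, counting occurrences so that self-gluings are included, produces a cellular cochain $h\in C^1(X;\mathbb F)$. On each triangle it is a difference of corner potentials, so it is a cocycle. As $X$ has a single vertex, $Z^1(X;\mathbb F)=H^1(X;\mathbb F)$.

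\emph{Step 3: the cocycle vanishes, hence so does $(x,y)$.} By universal coefficients, $H^1(X;\mathbb F)\cong\operatorname{Hom}(H_1(X;\Z),\mathbb F)$, since $H_0$ is free so there is no Ext term. The integral computation $H_1(X;\Z)=0$ at the start of this section then gives $h=0$. The inverse construction of Section~\ref{sec:injectivity} recovers the face values from the cochain: $x_f=h(ab)$ and $y_f=-h(bc)$. This construction is also characteristic-free. Hence $x=y=0$ over every such field. The case $\mathbb F=\Q$ gives injectivity, and the case $\mathbb F=\mathbb F_p$ for all $p$ gives primitivity by the first paragraph.

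The main obstacle is Step 1 and the descent in Step 2. One must check that nothing in Sections~\ref{sec:local}--\ref{sec:injectivity} secretly used characteristic zero, such as a halving in $\Delta$ or a sign cancellation valid only over $\Q$. I would verify the table \eqref{eq:qtable} directly modulo $p$ with $t=0$, and check that the independence of $q_{jm}$ from the choice of $k,l$ uses only sign changes.
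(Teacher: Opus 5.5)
Your proposal is correct and follows essentially the paper's proof. Both show that $\ker(F_0\otimes\mathbb F)$ injects into $Z^1(X;\mathbb F)=H^1(X;\mathbb F)=0$ for every field, including characteristic two, and then deduce primitivity from full column rank modulo every prime. The paper phrases that last step via Smith factors, while you use a direct torsion argument. The paper also checks edge agreement with a direct table of the two face values on each local edge rather than through the $q_{jm}$ table; both routes are characteristic-free.
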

\begin{proof}
We identify its kernel over an arbitrary field $k$, including characteristic
two. An element in $\ker(F_0\otimes k)$ satisfies locally
\[
 x_1=x_0+x_2,\qquad x_3=x_2,\qquad
 y_2=y_1+y_3,\qquad y_0=y_1,\qquad y_3=-x_0.
\]
Give each ordered face the edge values $(x,x-y,-y)$ on its lower,
diagonal, and upper edges. The two incident faces give the following
values on the six local tetrahedron edges:
\[
\begin{array}{c|cc}
 \text{edge}&\text{first value}&\text{second value}\\\hline
 01&x_2&x_3\\
 02&x_1&x_3-y_3\\
 03&x_1-y_1&x_2-y_2\\
 12&x_0&-y_3\\
 13&x_0-y_0&-y_2\\
 23&-y_0&-y_1
\end{array}
\]
The displayed relations make the two columns equal. Face additivity is
built into the assignment, and the face variables agree across pairings.
We therefore obtain a cellular cocycle on $X$. Conversely, recover the
local face values from a cocycle by
\[
 (x_0,x_1,x_2,x_3)=(h_{12},h_{02},h_{01},h_{01}),\qquad
 (y_0,y_1,y_2,y_3)=(-h_{23},-h_{23},-h_{13},-h_{12}).
\]
The triangle cocycle relations give exactly the five local relations
above. These constructions use only addition and sign changes, so they
work over every field. Hence
\[
 \ker(F_0\otimes k)=Z^1(X;k)=0.
\]
The last equality follows from $H_1(X;\Z)=0$, the universal coefficient
theorem, and the vanishing of the coboundary from the single vertex.

It follows that $F_0$ has column rank $4N$ over $\Q$ and modulo every
prime. If any of its $4N$ nonzero Smith factors were divisible by a
prime $p$, its rank modulo $p$ would drop. All Smith factors are
therefore $1$, which proves both assertions.
\end{proof}

Suppose $t,s\in\Z^N$ satisfy $At-Bs=0$. Rational duality gives rational
$x,y$ such that
\[
 F_0\binom{x}{y}
 =\begin{pmatrix}Dt\\Dt\\s-\Delta t\end{pmatrix}.
\]
The right side is integral, since the diagonal entries of $\Delta$ are
$0$ or $1$. Lemma~\ref{lem:primitive-lifts} supplies integral $x,y$.
Conversely, integral face data give an integral solution of $At-Bs=0$.
Thus
\begin{equation}
 \Gamma=\Lambda,\qquad
 \Lambda=\{t\in\Z^N:At\in B\Z^N\}.
 \label{eq:latticeeq}
\end{equation}

\subsection{The longitude lattice index}

\begin{lemma}\label{lem:longitude-index}
For the unhalved preferred-longitude row,
\begin{equation}
 \Z^N/(A\Z^N+B\Z^N)\cong\Z/2\Z.
 \label{eq:nzcoker}
\end{equation}
\end{lemma}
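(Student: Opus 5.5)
The plan is to turn the cokernel in \eqref{eq:nzcoker} into a statement about saturation of the row lattice, and then read off that saturation from the integral Neumann--Zagier theory of \cite{Neumann} and \cite{GHHHR}. By Lemma~\ref{lem:nzfacts}, $(A\mid B)$ has rank $N$. Hence $\Z^N/(A\Z^N+B\Z^N)$ is finite, and its order is the product of the Smith factors of $(A\mid B)$. Its isomorphism type equals that of $\mathcal R^{\rm sat}/\mathcal R$, where $\mathcal R\subset\Z^{2N}$ is the lattice spanned by the $N$ rows and $\mathcal R^{\rm sat}=(\mathcal R\otimes\Q)\cap\Z^{2N}$ is its saturation. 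So it suffices to prove $\mathcal R^{\rm sat}/\mathcal R\cong\Z/2\Z$.

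\textbf{Step 1: the edge rows.} I would first treat the $N-1$ retained edge rows alone. I will show that they span a primitive sublattice $\mathcal R_{\rm edge}$ of $\Z^{2N}$, in the reduced coordinates obtained by eliminating the prime coordinate. Write $\beta:\Z^{N}\to\Z^{3N}/\langle(1,1,1)_i\rangle\cong\Z^{2N}$ for the integral edge map. By Lemma~\ref{lem:nzfacts}, its kernel is spanned by $\sum_e e$, so deleting one row loses nothing integrally. The cokernel of $\beta$ modulo torsion is then controlled by Neumann's integral version of the complex in \cite[Theorem~4.1]{Neumann}. Its torsion is governed by $H_1(X;\Z)$, which vanishes by the computation at the start of this section, together with the factor $2$ in $\alpha(v)=2\sum_e e$. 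I would check directly that this factor only affects the vertex-to-edge stage and not the edge-to-tetrahedron stage. This gives primitivity of $\mathcal R_{\rm edge}$.

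\textbf{Step 2: adjoining the longitude.} Because $\mathcal R_{\rm edge}$ is primitive, $\mathcal R^{\rm sat}/\mathcal R$ is cyclic. It is generated by the smallest $m\ge1$ for which $\ell/m$ lies in $\mathcal R^{\rm sat}$, up to adding $\mathcal R_{\rm edge}$; here $\ell$ is the unhalved longitude row. To identify $m$, I would use the normal-surface lattice theorem of \cite{GHHHR}. Integral vectors orthogonal, under the symplectic form, to all edge rows correspond to integral quadrilateral solutions. In the normalization of \cite[Section~7.1]{GHHHR}, the boundary of such a solution is an \emph{even} multiple $2[\gamma]$ of a peripheral class. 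Moreover, every class $2[\gamma]$ with $[\gamma]\in\ker(H_1(\partial M;\Z)\to H_1(M;\Z))$ occurs. Dualizing via the symplectic pairing of Lemma~\ref{lem:nzfacts} relates $\mathcal R^{\rm sat}$ to this lattice. Since $[\lambda]$ is primitive in the kernel, the half-row $\ell/2$ is integral modulo $\mathcal R_{\rm edge}$, but $\ell/4$ and $\ell/p$ for odd primes $p$ are not. Equivalently, $m=2$.

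I expect the main obstacle to be the exact factor-of-two bookkeeping in Step 2. One must show both that $\ell/2$ is integral modulo edge rows and that nothing finer is. I would first try to settle it by working modulo $2$ and modulo odd primes $p$ separately. Modulo $p$ odd, the GHHHR correspondence is an isomorphism, and the row lattice stays saturated. Modulo $2$, the turning-sign description of $\ell$ in Section~\ref{sec:peripheral} shows that every entry of $\ell+\sum(\text{edge rows})$ has the same parity as twice a corner count. This gives the divisibility. The non-divisibility by $4$ then follows from primitivity of $[\lambda]$. As a consistency check I would verify the figure-eight data of Example~\ref{ex:figureeight}. There the columns of $A$ and $B$ span $\{(a,b):b\ \text{even}\}$, which has index $2$ in $\Z^2$, as predicted.
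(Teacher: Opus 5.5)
Your reduction to the saturation index, and the cyclicity argument from primitivity of the edge lattice, match the paper's proof. Step~2, however, has a genuine gap, exactly at the factor of two.

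\textbf{The factor of two.} You state that every integral quadrilateral solution has boundary an even multiple $2[\gamma]$. That is not what \cite[Theorem~7.1]{GHHHR} says. It identifies $Q/(\mathscr E+\mathscr T)$ with the pairs $(a,b)\in H_2(M,\partial M;\Z/2)\times H_1(\partial M;\Z)$ satisfying $\partial_*a=b\bmod 2$. For a knot exterior, projection to $b$ gives $L_\partial=2\Z\mu\oplus\Z\lambda$, which contains the odd class $\lambda$. What has boundary $2\gamma$ is the signed-corner row of a curve $\gamma$, not an arbitrary solution.

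Your description would in fact give $m=1$. Under it, $\mathcal R^{\rm sat}/\mathcal R_{\rm edge}$ would embed in $2\Z\lambda$, and $\ell\mapsto 2\lambda$ generates $2\Z\lambda$. So the inference ``since $[\lambda]$ is primitive in the kernel, $\ell/2$ is integral modulo $\mathcal R_{\rm edge}$'' does not follow from what you wrote.

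The missing input is the mod-two Seifert class. The class $a=[\Sigma]$ satisfies $\partial_*a=\lambda\bmod 2$, so some integral quadrilateral class $\sigma$ has boundary $\lambda$. Injectivity of the boundary map on $\overline Q/\overline{\mathscr E}$ then gives $\ell\equiv 2\sigma$ modulo the edge lattice.

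The reverse bound needs three facts:
\begin{itemize}
\item[(i)] $\mathcal R^{\rm sat}\subseteq\overline Q$, which holds because $\overline Q$ is a kernel and hence primitive in $J$;
\item[(ii)] injectivity of the boundary map on $\overline Q/\overline{\mathscr E}$;
\item[(iii)] primitivity of $\lambda$ in $L_\partial$.
\end{itemize}

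\textbf{The mod-two fallback.} This cannot close the gap. Any parity statement read off from the local turning signs applies verbatim to every normal peripheral curve, including the meridian. But the meridian's signed-corner row maps to $2\mu$, which is primitive in $L_\partial$. So in that case the row lattice is saturated, and no edge combination makes the row even. The evenness of $\ell$ modulo edge rows is the global fact that $\lambda$ bounds mod two; it is not a local parity property of turning signs.

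\textbf{Step~1.} The remark about the factor $2$ in $\alpha(v)$ is likewise not yet an argument. Under the same identification, the torsion of $\overline Q/\overline{\mathscr E}$ is $\{(a,0):\partial_*a=0\}$, and this vanishes for a knot exterior. The paper obtains primitivity of the edge lattice from this single theorem, since $L_\partial$ is free.
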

\begin{proof}
We spell out the lattice normalization, since rescaling the peripheral
row changes this conclusion. Let $\mathscr T\subset\Z^{3N}$ be
generated by $(1,1,1)$ in each tetrahedron, and put
$J=\Z^{3N}/\mathscr T$. The quotient map in each tetrahedron is
\[
 (g,g',g'')\longmapsto(g-g',g''-g').
\]
Thus $J\cong\Z^{2N}$ is the row-coordinate lattice for $(A\mid B)$.
Identify each shape coordinate with the quadrilateral type facing its
opposite-edge pair, using the table in Section~1.1. Thus the ordered
pairs $01\mid23,02\mid13,03\mid12$ correspond to $(z,z'',z')$ in a
positive tetrahedron and $(z,z',z'')$ in a negative tetrahedron. This
identifies the lattice by edge pairs, independently of the prime names
used for quadrilateral coordinates in the reference.

Let $Q\subset\Z^{3N}$ be the lattice of integral solutions of the
quadrilateral matching equations, and let $\mathscr E$ be generated by
the edge solutions. The latter are precisely the full gluing-row vectors
before the quotient by $\mathscr T$; see \cite[Sections 3.1 and
6.2]{GHHHR}. Since $\mathscr T\subset Q$, the matching map descends
to $J$, with kernel $\overline Q=Q/\mathscr T$. In particular
$\overline Q$ is primitive in $J$.

Theorem~7.1 of \cite{GHHHR} identifies
\[
 Q/(\mathscr E+\mathscr T)
 \cong\{(a,b)\in H_2(M,\partial M;\Z/2)
                   \times H_1(\partial M;\Z):
                   \partial_*a=b\bmod2\}.
\]
Choose a meridian--longitude basis $(\mu,\lambda)$, with $\mu$ mapping
to a generator of $H_1(M;\Z)$ and $\lambda$ mapping to zero. For a
knot exterior, the mod-two boundary map is injective with image generated
by $\lambda\bmod2$. Projection onto $b$ therefore identifies this
quotient with
\[
 L_{\partial}=2\Z\mu\oplus\Z\lambda.
\]
In particular, the edge lattice
$\overline{\mathscr E}=(\mathscr E+\mathscr T)/\mathscr T$
is primitive in $\overline Q$.

The signed-corner construction adds one signed quadrilateral for every
normal arc of an oriented peripheral curve of class $b$. With the
matching turning and boundary orientations, its boundary is $2b$,
by \cite[Section 7.1]{GHHHR}; this is also the normalization
$\gamma\delta=2\operatorname{id}$ in
\cite[Lemma 4.3(i)]{Neumann}. Our unhalved
longitude row is this construction for $b=\lambda$, so its image in
$L_{\partial}$ is $2\lambda$, up to a simultaneous reversal of the
peripheral orientation convention. The permitted additions of edge
and tetrahedral rows do not change this class in
$Q/(\mathscr E+\mathscr T)$. Let $L\subset J$ be generated by
$\overline{\mathscr E}$ and this longitude row. Modulo
$\overline{\mathscr E}$, it is $2\Z\lambda$, whose saturation in
$L_{\partial}$ is $\Z\lambda$. Thus
\[
 [\operatorname{sat}_{\overline Q}L:L]=2.
\]
Because $\overline Q$ is primitive in $J$, saturation in $J$ gives the
same group. Finally, the sum of all edge rows is zero in $J$: this sum
has the local coordinate triple $(2,2,2)$ in every tetrahedron.
Deleting any one edge row therefore leaves the generated lattice
unchanged. The rows of $(A\mid B)$ generate exactly $L$.

This row lattice has rank $N$ by \eqref{eq:nzfacts}. Its index in its
saturation is the product of the nonzero Smith factors of $(A\mid B)$,
so that product is $2$. The column cokernel of this full-row-rank matrix
is consequently the group of order $2$, proving \eqref{eq:nzcoker}.
\end{proof}

\subsection{The exact sequence and kernel decomposition}

Define
\[
 g:\Z^N\longrightarrow\coker B,\qquad g(t)=[At]_B.
\]
Its kernel is $\Lambda=\Gamma$, and Lemma~\ref{lem:longitude-index}
identifies its cokernel with $\Z/2\Z$. The map $\alpha$ above is
surjective with the same kernel. Therefore it induces an isomorphism
between $\coker C\oplus\coker\Ct$ and $\im g$. Explicitly, the
resulting injection is
\[
 \iota(([Dt]_C,[Dt]_{\Ct}))=[At]_B.
\]
Every element of its domain has this form because $\alpha$ is onto.
If $t$ and $t'$ represent the same element, then $t-t'\in\Gamma=
\Lambda$, so $A(t-t')\in B\Z^N$. This proves independence of the
representative. We obtain
\[
 0\longrightarrow\coker C\oplus\coker\Ct
 \xrightarrow{\iota}\coker B
 \longrightarrow\Z/2\Z\longrightarrow0,
\]
as asserted in \eqref{eq:cokersequence}. No finiteness or nonsingularity
assumption has been used.

For $s\in\ker_{\Z}B$, the integral point $(0,s)$ has an integral face
lift by Lemma~\ref{lem:primitive-lifts}. Hence
\[
 s=Rx+\Rt y,\qquad x\in\ker_{\Z}C,\quad
 y\in\ker_{\Z}\Ct.
\]
The injectivity of $F_0$ makes this representation unique. This proves
\eqref{eq:integerkernel} and completes Theorem~\ref{thm:integral}.

If $C$ is nonsingular, nullity doubling makes $B$ nonsingular as well.
Taking finite group orders in the exact sequence and applying
Theorem~\ref{thm:integral-reversal} gives
$|\det B|=2|\det C|^2$. If $C$ is singular, both determinants are zero.
This proves Theorem~\ref{thm:det}.

These arguments also apply to an oriented integral homology solid torus.
Its ideal compactification has $H_1(X;\Z)=0$, and its primitive
homological longitude generates the kernel of
$H_1(\partial M;\Z)\to H_1(M;\Z)$. The same mod-two boundary
calculation and the same doubled-longitude construction apply. This
extension does not cover rational homology solid tori with torsion in
$H_1(M;\Z)$.

\begin{corollary}[Odd-primary doubling and characteristic two]
\label{cor:primes}
At each odd prime $p$, the $p$-primary torsion of $\coker B$ is the
direct sum of two copies of that of $\coker C$. In particular,
\[
 \nullity_{\mathbb F_p}B=2\nullity_{\mathbb F_p}C
 \qquad(p\text{ odd}).
\]
In characteristic two,
\[
 2\nullity_{\mathbb F_2}C
 \ \leq\ \nullity_{\mathbb F_2}B
 \ \leq\ 2\nullity_{\mathbb F_2}C+1.
\]
\end{corollary}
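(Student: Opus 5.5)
The plan is to localize the exact sequence \eqref{eq:cokersequence} of Theorem~\ref{thm:integral} at each prime and combine it with Theorem~\ref{thm:integral-reversal}. For an integer matrix $T$ with $m$ rows, $\coker T\cong\Z^{m-\rank T}\oplus\mathrm{Tors}(T)$. Moreover, $\nullity_{\mathbb F_p}T$ equals $\nullity_{\Q}T$ plus the number of Smith factors of $T$ divisible by $p$. That number is the rank of the $p$-primary part $\mathrm{Tors}_p(T)$ as a minimal number of cyclic generators, namely $\dim_{\mathbb F_p}(\mathrm{Tors}_p(T)/p)$. Since $B$ and $C$ are square, $\nullity_{\mathbb F_p}T=\dim_{\mathbb F_p}(\coker T\otimes\mathbb F_p)$ in both cases. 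I will use both descriptions.

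\textbf{Odd primes.} Tensoring \eqref{eq:cokersequence} with $\Z_{(p)}$ is exact, and $\Z/2\Z\otimes\Z_{(p)}=0$ for odd $p$. This gives an isomorphism
\[
(\coker C\oplus\coker\Ct)\otimes\Z_{(p)}\cong\coker B\otimes\Z_{(p)}.
\]
Comparing torsion subgroups shows that the $p$-primary torsion of $\coker B$ is the direct sum of the $p$-primary torsions of $\coker C$ and $\coker\Ct$. By Theorem~\ref{thm:integral-reversal} the latter two are isomorphic, which gives the doubling statement. Tensoring further with $\mathbb F_p$, or equivalently counting free ranks with $\nullity B=2n$ from Theorem~\ref{thm:main}, yields $\nullity_{\mathbb F_p}B=2\nullity_{\mathbb F_p}C$.

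\textbf{Characteristic two.} Apply $-\otimes\mathbb F_2$ to \eqref{eq:cokersequence}. Right exactness gives
\[
(\coker C\oplus\coker\Ct)\otimes\mathbb F_2\to\coker B\otimes\mathbb F_2\to\mathbb F_2\to0,
\]
so $\dim(\coker B\otimes\mathbb F_2)\le 2\nullity_{\mathbb F_2}C+1$, using Theorem~\ref{thm:integral-reversal} again for $\Ct$. For the lower bound I use the Tor sequence. The map $\iota\otimes\mathbb F_2$ has kernel equal to the image of $\mathrm{Tor}(\Z/2,\mathbb F_2)=\mathbb F_2$, so it has dimension at most one. This gives only $\nullity_{\mathbb F_2}B\ge 2\nullity_{\mathbb F_2}C$ once the kernel is accounted for against the final $\mathbb F_2$ term. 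Counting exactly, $\dim(\coker B\otimes\mathbb F_2)=2\nullity_{\mathbb F_2}C-\dim\ker+1$, where $\dim\ker\in\{0,1\}$. This yields both inequalities at once.

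The main obstacle is making this Tor bookkeeping precise. One has to confirm that the connecting map $\mathrm{Tor}_1(\Z/2,\mathbb F_2)\to(\coker C\oplus\coker\Ct)\otimes\mathbb F_2$ contributes at most one dimension. An alternative is to argue directly with finitely generated abelian groups. If $0\to P\to Q\to\Z/2\to0$ is exact, then the minimal number of generators of $Q/2Q$ lies between that of $P/2P$ and that of $P/2P$ plus one. The lower bound holds because $2Q\cap P\supseteq 2P$ and the index $[P\cap 2Q:2P]$ is at most $2$, while $Q/P$ has order $2$. I would first try this elementary index count, and fall back on the Tor sequence if that proves messier.
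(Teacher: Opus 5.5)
Your proposal is correct and follows the paper's proof. At odd $p$ you localize \eqref{eq:cokersequence} to kill $\Z/2\Z$ and apply Theorem~\ref{thm:integral-reversal}; at $p=2$ you tensor with $\mathbb F_2$ and use that the connecting map out of $\operatorname{Tor}_1^{\Z}(\Z/2,\mathbb F_2)\cong\mathbb F_2$ has image of dimension $0$ or $1$, which is your count $\dim(\coker B\otimes\mathbb F_2)=2\nullity_{\mathbb F_2}C-\dim\ker+1$. The step you flag as the main obstacle is therefore immediate, and your elementary index count also works, since $2\coker B$ already lies in the image of $\iota$.
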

\begin{proof}
Localization at an odd prime kills the final term of
\eqref{eq:cokersequence}; reversal identifies the first two summands.
For $p=2$, tensoring that sequence with $\mathbb F_2$ gives a connecting
map from $\operatorname{Tor}_1^{\Z}(\Z/2,\mathbb F_2)$. Its image
has dimension zero or one, which gives the two bounds.
\end{proof}

In the broader integral-homology-solid-torus setting, the ancillary
specimen \texttt{m121} has $\coker C\cong\Z/4\Z$ and
$\coker B\cong\Z/4\Z\oplus\Z/8\Z$. It shows that
\eqref{eq:cokersequence} need not split. This specimen is not asserted
to be a knot exterior in $S^3$. In the singular case, the extension can
also involve the free part, so the exact sequence does not by itself
produce a separate $2$-torsion summand.

\section{An embedded relative complex and cyclic face torsion}
\label{sec:embedded}

We prove Theorem~\ref{thm:cyclic} by realizing $C^{\mathsf T}$ as a
relative cellular boundary map. Throughout this section $M$ may be any
oriented integral homology solid torus with a one-cusped ordered ideal
triangulation. No angle structure is required.

\subsection{The local disks}

Truncate the ideal vertices. In each quotient face $f$, choose an arc
$u_f$ parallel to its lower edge, oriented from its lowest to its middle
cusp corner. Its two endpoints lie in the interiors of the corresponding
truncation edges. Make the choices on quotient faces and lift them to
local faces; this ensures agreement across every face pairing.

In one truncated tetrahedron, form two closed curves on its boundary.
The first follows $u_2,u_0,-u_1$, joined in the cusp caps at vertices
$1,2,0$. The second follows $u_2,-u_3$, joined in caps $1,0$. More
explicitly, the required joining arcs are
\[
\begin{array}{c|cc}
 \text{cap}&\text{first curve}&\text{second curve}\\\hline
 0&\text{sides }1\text{ and }2&\text{sides }2\text{ and }3\\
 1&\text{sides }2\text{ and }0&\text{sides }2\text{ and }3\\
 2&\text{sides }0\text{ and }1&\text{none}\\
 3&\text{none}&\text{none}
\end{array}
\]
Here a side is labeled by the face containing it. In caps $0$ and $1$,
the two arcs can be chosen with disjoint interiors and with their common
endpoint on side $2$. All other joining arcs lie in different cap
interiors. The two closed curves consequently share exactly $u_2$ and
form an embedded theta graph on the boundary sphere.

The two cycles containing $u_2$ bound the two complementary disk regions
incident to that edge. Choose a collar of the boundary sphere inside the
tetrahedron. On each of these disk regions choose a piecewise linear
height function that is zero on the boundary and positive in the
interior. The graphs of the two functions in the collar are embedded
disks with disjoint interiors, meeting exactly along $u_2$. This
construction fixes the boundary curves and moves all disk interiors
into the tetrahedron interior. Orient the disks independently so that,
modulo their cusp arcs, their boundaries are
\[
 u_2+u_0-u_1,\qquad u_2-u_3.
\]

Glue these patches across the paired faces and denote their union by
$\mathcal B_C\subset M$. The result is an embedded finite PL complex.
Indeed, disk interiors lie in tetrahedron interiors, which are not
identified by face gluings. At a quotient face, pages from the two
tetrahedral sides lie in its two distinct half-neighborhoods and meet
only along $u_f$. At an endpoint of $u_f$, exactly two cusp-triangle
sides are paired. Since the endpoint is in the interior of such a side,
there are no additional identifications through cusp vertices.
These descriptions also apply when two distinct faces of the same
tetrahedron are paired. Individual cell closures may acquire boundary
identifications; the resulting cellular structure need not be regular.
Set
\[
 \mathcal G_C=\mathcal B_C\cap\partial M.
\]
This is the embedded graph formed by the cusp arcs.

\subsection{The relative cellular complex}

The relative cells of $(\mathcal B_C,\mathcal G_C)$ are the $2N$ face
arcs $u_f$ in dimension one and the two disks per tetrahedron in
dimension two. There are no relative zero-cells and no cells of higher
dimension. With the first disk of each tetrahedron listed before the
second family, the boundary map is $C^{\mathsf T}$. Therefore
\begin{equation}
 H_1(\mathcal B_C,\mathcal G_C;\Z)=\coker C^{\mathsf T},\qquad
 H_2(\mathcal B_C,\mathcal G_C;\Z)=\ker C^{\mathsf T}.
 \label{eq:relativeface}
\end{equation}
The complex $\mathcal B_C$ is connected: each tetrahedral patch is
connected and includes all four local face arcs, and adjacent
tetrahedral patches share the arc of their paired face.

\subsection{Filling and divisibility}

Choose a solid-torus filling $Y=M\cup V$ that is an integral homology
sphere. For a knot exterior in $S^3$, take its original meridional
filling. Such a filling also exists for every integral homology solid
torus: Poincar\'e--Lefschetz duality gives
$H_1(M,\partial M;\Z)=H^2(M;\Z)=0$, so the boundary map onto
$H_1(M;\Z)=\Z$ is surjective. Any slope mapping to a generator is
primitive, and filling along it gives $H_1(Y;\Z)=0$ by the
Mayer--Vietoris sequence. Closed oriented three-manifold duality then
makes $Y$ an integral homology sphere.

Put
\[
 \mathcal Z=\mathcal B_C\cup V\subset Y.
\]
Since $\mathcal B_C\cap V=\mathcal G_C$, the relative cellular
complex of $(\mathcal Z,V)$ is that of
$(\mathcal B_C,\mathcal G_C)$.

We need the following elementary consequence of duality: the first
integral homology of an embedded finite PL complex in an integral
homology three-sphere is free abelian. In the present case, choose a
closed regular neighborhood $U$ of $\mathcal Z$ in $Y$ and put
$W=\overline{Y\setminus U}$. The regular neighborhood retracts onto
$\mathcal Z$. The pair sequence for $(Y,U)$, excision across collars,
and Poincar\'e--Lefschetz duality give
\[
 H_1(\mathcal Z;\Z)\cong H_1(U;\Z)
 \cong H_2(Y,U;\Z)
 \cong H_2(W,\partial W;\Z)
 \cong H^1(W;\Z).
\]
The pair-sequence isomorphism uses $H_2(Y)=H_1(Y)=0$. Finally,
$H^1(W;\Z)$ is free abelian by the universal coefficient theorem; this
does not require $W$ to be connected.

Both $V$ and $\mathcal Z$ are connected. Their pair sequence and
\eqref{eq:relativeface} consequently give
\begin{equation}
 \coker C^{\mathsf T}
 \cong H_1(\mathcal Z;\Z)/\im H_1(V;\Z).
 \label{eq:corequotient}
\end{equation}
The image of $H_1(V;\Z)$ is generated by the class of the filling core.
Write $H_1(\mathcal Z;\Z)\cong\Z^r$ and let $v$ be this core
class. If $v=0$, the quotient is free. Otherwise, write $v=dv_0$ with
$v_0$ primitive and $d>0$. Extending $v_0$ to an integral basis gives
\[
 \Z^r/\Z v\cong\Z^{r-1}\oplus\Z/d\Z.
\]
Thus the torsion subgroup of $\coker C^{\mathsf T}$ is cyclic.
Transposition preserves Smith normal form, so the same is true of
$\coker C$.

If $C$ is nonsingular, \eqref{eq:relativeface} also gives
$H_2(\mathcal Z,V;\Z)=0$. The pair sequence then makes
$H_1(V;\Z)\to H_1(\mathcal Z;\Z)$ injective. Its cokernel is finite,
so $H_1(\mathcal Z;\Z)$ has rank one and the core class is nonzero.
Its divisibility $d$ satisfies
\[
 \coker C^{\mathsf T}\cong\Z/d\Z,\qquad d=|\det C|.
\]
This proves Theorem~\ref{thm:cyclic}.

For nonsingular $C$, unimodularity is therefore equivalent to primitivity
of this filling-core class. The argument does not prove that primitivity.
In particular, rank-one homology does not imply that a regular
neighborhood of $\mathcal Z$ is a solid torus. A further restriction
from the geometry or ordering would be needed to deduce the original
unit-determinant assertion.

\section{Computational checks and remaining questions}\label{sec:checks}
\subsection{Reproducibility and scope of the checks}
The ancillary files contain executable exact-arithmetic checks, fixed input data, recorded output and pinned package versions. The checks reconstruct face matrices from ordered face tables, independently of the FAMED implementation used in \cite{BGW}. SnapPy supplies the gluing exponents and the fundamental-group presentation. Abelianization identifies the primitive rational longitude; for the knot exteriors this is the preferred longitude. The checks were run with Python~3.13, SnapPy~3.3.2, Regina~7.4.1 and SymPy~1.14.0.

\begin{center}
\begin{tabular}{lp{0.59\textwidth}}
\hline
Check & Extent\\
\hline
Local cochains & 48 polynomial identities, including both tetrahedron signs\\
Affine coordinates & The figure-eight example, including its logarithmic constants\\
Fixed specimens & 10 ordered triangulations, including face nullities $0,1,2$\\
Small face pairings & All $10{,}503$ labelled ordered tables with at most three tetrahedra\\
\hline
\end{tabular}
\end{center}

The fixed-specimen checks include the entire projected face relation, the constrained symmetric forms, integer-kernel decompositions, Smith factors, the determinant formula and reductions modulo $2,3,5,7,11$. Some specimens are integral homology solid tori with nonunit face determinant. They are not asserted to be knot exteriors in $S^3$ and therefore do not contradict the determinant conjecture in \cite{BGW}.

Of the $10{,}503$ labelled tables, $712$ give connected valid orientable one-torus ideal triangulations with first Betti number one. Among these, $43$ have no edge of degree one or two, and $31$ admit a strict angle structure. The positive-rank check passes on every table in these latter classes. These are counts of labelled tables, not counts of distinct manifolds. Regina's strict-angle test checks an angle structure; it is not a certificate of a complete geometric shape solution.

The ancillary \texttt{README.md} gives four commands that reproduce these checks. The two regenerated main result files agree byte-for-byte with the recorded outputs. Ranks, kernels, determinants, and matrix identities are decided by exact arithmetic. Numerical shape approximations play no role in these decisions. The finite computations provide checks on conventions and possible counterexamples; the universal claims depend on the proofs above.

\subsection{Questions left open}
Theorem~\ref{thm:det} reduces the remaining determinant assertion in Conjecture~4.1 of \cite{BGW} to $|\det C|=1$ in its nonsingular geometric knot setting. Section~\ref{sec:embedded} interprets this condition as primitivity of a filling-core class, but does not establish that primitivity. Proving it requires information beyond the cyclicity of the face torsion.

Wong's longitude--meridian conditions in Definition~1.5 of \cite{Wong} remain additional hypotheses for the Jones-function results. The preferred-longitude relation proved here does not supply these additional conditions. Finally, this work does not construct a suitable geometric triangulation for every hyperbolic knot. The full Andersen--Kashaev volume conjecture consequently remains outside the conclusions of this paper.

\appendix
\section{AI tools and verification status}
OpenAI Codex was used extensively to propose and develop the mathematical arguments, draft and revise the manuscript, generate computational checks, search the literature, and conduct internal reviews. The author directed the investigation and preparation of the manuscript. The use of Codex extended to the mathematical development and was not limited to language editing.

Partial informal specialist feedback has been received on an earlier draft. This does not constitute a complete independent verification of the arguments, and no formal proof-assistant verification has been completed. Internal reviews and finite exact-arithmetic checks do not establish correctness or novelty. This preprint is circulated for critical scrutiny; responsibility for its contents rests with the named author.
\end{document}